\documentclass{amsart}

\newtheorem{theorem}{Theorem}[section]
\newtheorem{lemma}[theorem]{Lemma}
\newtheorem{proposition}[theorem]{Proposition}
\newtheorem{corollary}[theorem]{Corollary}

\newtheorem{problem}[theorem]{Problem}

\theoremstyle{definition}

\newtheorem{remark}[theorem]{Remark}

\def\cocoa{{\hbox{\rm C\kern-.13em o\kern-.07em C\kern-.13em o\kern-.15em A}}}



\usepackage{amscd,amssymb}
\usepackage{youngtab}

\newenvironment{proofof}[1]{\noindent{\it Proof of
#1.}}{\hfill$\square$\\\mbox{}}

 \newenvironment{dedication}
        {\vspace{6ex}\begin{quotation}\begin{center}\begin{em}}
        {\par\end{em}\end{center}\end{quotation}}

\begin{document}

\title[Matrix valued concomitants of $\mathrm{SL}_2(\mathbb{C})$]
{Matrix valued concomitants of $\mathrm{SL}_2(\mathbb{C})$}

\author[M\'aty\'as Domokos]
{M\'aty\'as Domokos}
\address{Alfr\'ed R\'enyi Institute of Mathematics,
Re\'altanoda utca 13-15, 1053 Budapest, Hungary}
\email{domokos.matyas@renyi.hu}

\thanks{Partially supported by the Hungarian National Research, Development and Innovation Office,  NKFIH K 138828,  K 132002.}

\subjclass[2010]{13A50; 16R30; 17B15; 17B45; 20C30}

\keywords{joint concomitants of binary quadratic forms, irreducible representation, classical invariant theory, 
special linear group, adjoint action}

\maketitle

\begin{dedication}
{Dedicated to Rich\'ard Wiegandt on his $90$th birthday}
\end{dedication}

\begin{abstract} 
To a finite dimensional representation of a complex Lie group $G$, an associative algebra of adjoint 
covariant polynomial maps from the direct sum of $m$ copies of the Lie algebra $\mathfrak{g}$ of $G$ into an algebra of complex matrices is associated. When the tangent representation of the given representation is irreducible, the center of this algebra of concomitants can be identified with the algebra of adjoint invariant polynomial functions on $m$-tuples of elements of $\mathfrak{g}$. For irreducible finite dimensional representations  of $\mathrm{SL}_2(\mathbb{C})$ minimal generating systems of the corresponding algebras of concomitants are determined, both as an algebra and as a module over its center. 
\end{abstract}

\section{Introduction} \label{sec:intro}

In this paper we shall study a problem that fits into the following general framework. Given a finite dimensional 
$\mathbb{C}$-vector space $V$ endowed with a linear action of a group $G$, and given an associative $\mathbb{C}$-algebra $A$, on which $G$ acts via $\mathbb{C}$-algebra automorphisms, denote by $\mathcal{C}_G(V,A)$ the set of $G$-equivariant polynomial maps from $V$ to $A$. A map $\varphi:V\to A$ is \emph{polynomial} if its image spans a finite dimensional subspace $W$ of $A$, and the coordinate functions of $\varphi:V\to W$ with respect to a basis of $W$ are polynomial functions on $V$. The set of all polynomial maps from $V$ to $A$ is naturally an associative  
$\mathbb{C}$-algebra with pointwise addition and multiplication of functions; that is, for polynomial maps $F_1,F_2:V\to A$,  $v\in V$, $c_1,c_2\in \mathbb{C}$ we set $(c_1F_1+c_2F_2)(v)=c_1(F_1(v))+c_2(F_2(v))$  and 
$(F_1F_2)(v)=F_1(v)F_2(v)$. Since the coordinate functions of $c_1F_1+c_2F_2$ are $\mathbb{C}$-linear combinations of the coordinate functions of $F_1$ and $F_2$, whereas the coordinate functions of $F_1F_2$ are $\mathbb{C}$-linear combinations of products $f_1f_2$, where $f_1$ is a coordinate function of $F_1$ and $f_2$ is a coordinate function of $F_2$,  the maps $c_1F_1+c_2F_2$ and $F_1F_2$ are polynomial maps. Moreover, the subset $\mathcal{C}_G(V,A)$ of the algebra of polynomial maps from $V$ to $A$ is a subalgebra, since $G$ acts on $A$ via $\mathbb{C}$-algebra automorphisms. 
We call the elements of $\mathcal{C}_G(V,A)$ the \emph{$A$-valued concomitants on $V$}. 

An instance of the above general setup is naturally associated to any finite dimensional representation 
$\Psi:G\to \mathrm{GL}(\mathbb{C}^d)$ of a complex Lie group $G$ with Lie algebra $\mathfrak{g}$. 
Namely, take 
\begin{itemize} 
\item $V:=\mathfrak{g}^{\oplus m}$, the direct sum of  $m$ copies of $\mathfrak{g}$, 
endowed with the $m$-fold direct sum $\mathrm{Ad}^{\oplus m}$ of the adjoint representation 
$\mathrm{Ad}:G\to \mathrm{GL}(\mathfrak{g})$ of $G$ on $\mathfrak{g}$; 
\item $A:=\mathrm{End}_{\mathbb{C}}(\mathbb{C}^d)=\mathbb{C}^{d\times d}$, the algebra of $d\times d$ 
complex matrices (identified with the algebra of $\mathbb{C}$-linear transformations of $\mathbb{C}^d$), on which 
$G$ acts via the representation 
$\widetilde\Psi:G\to\mathrm{GL}(\mathbb{C}^{d\times d})$ defined as follows: 
for $g\in G$ and $M\in \mathbb{C}^{d\times d}$ we have 
\begin{equation}\label{eq:widetildePsi}\widetilde\Psi(g)(M)=\Psi(g)M\Psi(g)^{-1}.
\end{equation}  
\end{itemize} 

\begin{problem}\label{problem:1} 
Describe the algebra $\mathcal{C}_G(\mathfrak{g}^{\oplus m},\mathbb{C}^{d\times d})$ 
(e.g. find its generators). 
\end{problem} 

In our work we shall deal with Problem~\ref{problem:1}  in the case below: 
\begin{itemize} 
\item We take as  $G$ the special linear group $\mathrm{SL}_2(\mathbb{C})$  of $2\times 2$ complex matrices having determinant $1$; 
then $V$ is $\mathfrak{sl}_2(\mathbb{C})^{\oplus m}$, the direct sum of  $m$ copies of the Lie algebra 
$\mathfrak{sl}_2(\mathbb{C})$ of $\mathrm{SL}_2(\mathbb{C})$ (consisting of $2\times 2$ traceless complex matrices) endowed with the $m$-fold direct sum 
$\mathrm{Ad}^{\oplus m}$ of the \emph{adjoint representation} 
\[\mathrm{Ad}:\mathrm{SL}_2(\mathbb{C})\to \mathrm{GL}(\mathfrak{sl}_2(\mathbb{C})),\quad  
\mathrm{Ad}(g)(X)=gXg^{-1}\]  
 (matrix multiplication) for $g\in \mathrm{SL}_2(\mathbb{C})$, $X\in \mathfrak{sl}_2(\mathbb{C})$; 
\item We take for $\Psi$ the $d$-dimensional irreducible representation 
\[\Psi^{(d)}:\mathrm{SL}_2(\mathbb{C})\to \mathrm{GL}(\mathbb{C}^d).\]    
So 
$A=\mathrm{End}_{\mathbb{C}}(\mathbb{C}^d)=\mathbb{C}^{d\times d}$, the algebra of $d\times d$ complex matrices, on which 
$\mathrm{SL}_2(\mathbb{C})$ acts via the representation 
$\widetilde\Psi^{(d)}:\mathrm{SL}_2(\mathbb{C})\to\mathrm{GL}(\mathbb{C}^{d\times d})$ defined by 
\eqref{eq:widetildePsi}. 
\end{itemize}
 
 Throughout this paper set 
 \begin{equation}\label{eq:C^{(d)}}\mathcal{C}^{(d)}:=
\mathcal{C}_{\mathrm{SL}_2(\mathbb{C})}(\mathfrak{sl}_2(\mathbb{C})^{\oplus m},\mathbb{C}^{d\times d}).\end{equation} 
It is known that for $d\le 3$ the "obvious" concomitants generate the $\mathbb{C}$-algebra 
$\mathcal{C}^{(d)}$, see Remark~\ref{remark:d<=3}. In our Theorem~\ref{thm:algebra generators} we shall extend this result   
for all $d$. 
In Theorem~\ref{thm:center} we show for general $G$ that if the tangent representation of $\Psi$ is irreducible, 
then the center of $\mathcal{C}_G(\mathfrak{g}^{\oplus  m},\mathbb{C}^{d\times d})$ is isomorphic to the algebra of 
$\mathrm{Ad}^{\oplus m}$-invariant polynomial functions on $\mathfrak{g}^{\oplus m}$. In particular, 
the center of the algebra $\mathcal{C}^{(d)}$ can be identified with $\mathcal{D}_0$, the algebra of $\mathrm{SL}_2(\mathbb{C})$-invariants of $m$-tuples of binary quadratic forms. We shall give a minimal $\mathcal{D}_0$-module generating system of $\mathcal{C}^{(d)}$ as well in Theorem~\ref{thm:D_0-module generators}. 
These results are deduced from classical known facts on the invariant theory of binary forms, with the aid of the representation theory of the general linear group.

\section{Adjoint invariants and the center of the concomitant algebra} 

In this section $G$ is any complex Lie group, $\mathfrak{g}$ is its Lie algebra, on which $G$ acts via its adjoint representation $\mathrm{Ad}:G\to \mathrm{GL}(\mathfrak{g})$. Let $\Psi:G\to \mathrm{GL}(\mathbb{C}^d)$ be a finite dimensional representation of $G$, and $\widetilde\Psi:G\to \mathrm{GL}(\mathbb{C}^{d\times d})$ the representation associated to $\Psi$ as in \eqref{eq:widetildePsi}. 
Denote by  $\rho:\mathfrak{g}\to \mathfrak{gl}(\mathbb{C}^d)=\mathbb{C}^{d\times d}$ the tangent representation of $\Psi$ (so $\rho$ is the differential of $\Psi$ at the identity element of $G$). We need the following basic fact: 

\begin{lemma} \label{lemma:Ad intertwiner} 
The linear map $\rho$  intertwines between the representation $\mathrm{Ad}$ of $G$ on $\mathfrak{g}$ and the representation $\widetilde\Psi$ of $G$ on $\mathbb{C}^{d\times d}$; that is,  
for $g\in G$ and $X\in \mathfrak{g}$ we have 
\[\rho(\mathrm{Ad}(g)(X))=\Psi(g) \rho(X)\Psi(g)^{-1} \text{ (matrix multiplication on the right hand side)}.\] 
\end{lemma}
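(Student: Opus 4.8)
The plan is to verify the intertwining identity by differentiating a more symmetric relation. Recall that $\Psi$ is a group homomorphism, so for every $g \in G$ the composite $h \mapsto \Psi(g)\Psi(h)\Psi(g)^{-1} = \Psi(ghg^{-1})$ holds for all $h \in G$. Fixing $g$ and viewing both sides as smooth maps $G \to \mathrm{GL}(\mathbb{C}^d)$ in the variable $h$, I would differentiate at $h = e$ (the identity element). The left-hand side is the conjugation automorphism $M \mapsto \Psi(g)M\Psi(g)^{-1}$ of $\mathbb{C}^{d\times d}$ applied to $\Psi(h)$; since this automorphism is linear (hence its own differential), its differential at $h=e$ applied to a tangent vector $X \in \mathfrak{g} = T_e G$ is $\Psi(g)\,\rho(X)\,\Psi(g)^{-1}$, because $\rho = d\Psi_e$ by definition. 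The right-hand side is $\Psi$ precomposed with the inner automorphism $c_g: h \mapsto ghg^{-1}$ of $G$; by the chain rule its differential at $e$ applied to $X$ is $\rho\bigl((dc_g)_e(X)\bigr) = \rho(\mathrm{Ad}(g)(X))$, since $\mathrm{Ad}(g) = (dc_g)_e$ is exactly the definition of the adjoint representation. Equating the two differentials gives the claimed identity.

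Concretely, one clean way to carry this out is to pick a smooth curve $\gamma:(-\varepsilon,\varepsilon)\to G$ with $\gamma(0)=e$ and $\dot\gamma(0)=X$, and compute
\[
\frac{d}{dt}\Big|_{t=0}\Psi\bigl(g\gamma(t)g^{-1}\bigr)
=\frac{d}{dt}\Big|_{t=0}\Psi(g)\Psi(\gamma(t))\Psi(g)^{-1}
=\Psi(g)\Bigl(\frac{d}{dt}\Big|_{t=0}\Psi(\gamma(t))\Bigr)\Psi(g)^{-1}
=\Psi(g)\rho(X)\Psi(g)^{-1},
\]
where the middle step uses that $M\mapsto \Psi(g)M\Psi(g)^{-1}$ is a fixed linear map and thus commutes with $\tfrac{d}{dt}$. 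On the other hand $t\mapsto g\gamma(t)g^{-1}$ is a curve through $e$ with velocity $\mathrm{Ad}(g)(X)$ at $t=0$, so by the chain rule the same derivative equals $\rho(\mathrm{Ad}(g)(X))$. Comparing yields the lemma.

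There is essentially no serious obstacle here: the statement is the infinitesimal form of the identity $\Psi(ghg^{-1}) = \Psi(g)\Psi(h)\Psi(g)^{-1}$, and the only things being used are the chain rule, the definition of $\rho$ as $d\Psi_e$, the definition of $\mathrm{Ad}(g)$ as the differential at $e$ of conjugation by $g$, and linearity of the conjugation map on $\mathbb{C}^{d\times d}$. The one point deserving a word of care is the justification that differentiating $\Psi(g)\Psi(\gamma(t))\Psi(g)^{-1}$ in $t$ pulls the constant matrices $\Psi(g)$ and $\Psi(g)^{-1}$ outside the derivative; this is immediate because matrix multiplication is bilinear and $\Psi(g), \Psi(g)^{-1}$ do not depend on $t$. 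Thus the identity holds for every $g\in G$ and every $X\in\mathfrak{g}$, which is precisely the assertion that $\rho$ intertwines $\mathrm{Ad}$ with $\widetilde\Psi$.
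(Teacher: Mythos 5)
Your proof is correct and follows essentially the same route as the paper: differentiate the identity $\Psi(g\gamma(t)g^{-1})=\Psi(g)\Psi(\gamma(t))\Psi(g)^{-1}$ along a curve $\gamma$ through the identity with velocity $X$, pulling the constant matrices $\Psi(g)$, $\Psi(g)^{-1}$ outside the derivative. Your extra remarks about the chain rule and $\mathrm{Ad}(g)=(dc_g)_e$ just make explicit what the paper leaves implicit.
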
 
\begin{proof} 
Take $\gamma:\mathbb{R}\to G$ with $\frac{d}{dt}\gamma\vert_{t=0}=X$. 
By definition of the tangent representation we have 
\begin{align*}\rho(\mathrm{Ad}(g)(X))=\frac{d}{dt}\Psi(g\gamma(t)g^{-1})\vert_{t=0}=\frac{d}{dt}(\Psi(g)\Psi(\gamma(t))\Psi(g)^{-1})\vert_{t=0}
\\ =\Psi(g)\left(\frac{d}{dt}\Psi(\gamma(t)\vert_{t=0}\right) \Psi(g)^{-1}=\Psi(g)\rho(X)\Psi(g)^{-1}.\end{align*} 
\end{proof} 

\begin{remark} 
Note that 
Lemma~\ref{lemma:Ad intertwiner} implies that $\rho$ intertwines between the Lie algebra representations  
\[\mathrm{ad}:\mathfrak{g}\to \mathfrak{gl}(\mathfrak{g}), \quad 
\mathrm{ad}(X)(Y)=[X,Y] \text{ for }X,Y\in \mathfrak{g}\]  
and the representation 
\[\widetilde\rho:\mathfrak{g}\to \mathbb{C}^{d\times d},  \quad 
\widetilde\rho(X)(M)= \rho(X)M-M\rho(X) \]
for $X\in \mathfrak{g}$, $M\in \mathbb{C}^{d\times d}$.
This way we recover the obvious equality $\rho([X,Y])=\rho(X)\rho(Y)-\rho(Y)\rho(X)$ for $X,Y\in \mathfrak{g}$ 
(expressing that $\rho$ is a representation of the Lie algebra $\mathfrak{g}$). 
\end{remark}

Denote by 
\[R:=\mathbb{C}[\mathfrak{g}^{\oplus m}]^G\] 
the algebra of $\mathrm{Ad}^{\oplus m}$-invariant polynomial functions on $\mathfrak{g}^{\oplus m}$. 
Note that the map $f\mapsto fI_d$ (where $I_d$ is the $d\times d$ identity matrix) is an embedding of $R$ into the center of $\mathcal{C}_G(\mathfrak{g}^{\oplus m},\mathbb{C}^{d \times d})$.

\begin{proposition}\label{prop:noetherian} 
\begin{itemize} 
\item[(i)] The algebra $\mathcal{C}_G(\mathfrak{g}^{\oplus m},\mathbb{C}^{d \times d})$ is integral over its central subalgebra $RI_d$. 
\item[(ii)] If $G$ is reductive or $G$ is a maximal unipotent subgroup of a reductive group, then  
$\mathcal{C}_G(\mathfrak{g}^{\oplus m},\mathbb{C}^{d \times d})$ is a finitely generated module over its central subalgebra $RI_d$. 
In particular, the algebra $\mathcal{C}_G(\mathfrak{g}^{\oplus m},\mathbb{C}^{d \times d})$ is noetherian for such groups. 
\end{itemize}
\end{proposition}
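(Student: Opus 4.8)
The plan is to prove part (i) first and then derive part (ii) from it together with classical finiteness results.

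\textbf{Part (i): integrality over $RI_d$.} The key observation is that the algebra $\mathcal{C}_G(\mathfrak{g}^{\oplus m},\mathbb{C}^{d\times d})$ sits inside the algebra $\mathcal{M}$ of \emph{all} polynomial maps $\mathfrak{g}^{\oplus m}\to \mathbb{C}^{d\times d}$, which is nothing but $\mathbb{C}[\mathfrak{g}^{\oplus m}]\otimes_{\mathbb{C}} \mathbb{C}^{d\times d}$, i.e.\ the $d\times d$ matrices over the polynomial ring $\mathbb{C}[\mathfrak{g}^{\oplus m}]$. Every element $F$ of this matrix algebra satisfies its characteristic polynomial (Cayley--Hamilton over the commutative ring $\mathbb{C}[\mathfrak{g}^{\oplus m}]$), which is a monic degree-$d$ polynomial whose coefficients are polynomials in the entries of $F$, hence lie in $\mathbb{C}[\mathfrak{g}^{\oplus m}]$. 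When $F\in \mathcal{C}_G(\mathfrak{g}^{\oplus m},\mathbb{C}^{d\times d})$, the equivariance of $F$ forces those coefficient functions to be $\mathrm{Ad}^{\oplus m}$-invariant: conjugating $F(v)$ by $\Psi(g)$ does not change its characteristic polynomial, while it sends $F(v)$ to $F(\mathrm{Ad}^{\oplus m}(g)v)$ by equivariance together with Lemma~\ref{lemma:Ad intertwiner}. Therefore the characteristic polynomial of $F$ is a monic polynomial over $R$, exhibiting $F$ as integral over $RI_d$. This handles (i).

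\textbf{Part (ii): finite generation as an $RI_d$-module.} Here I would invoke classical invariant-theoretic finiteness. First, $\mathcal{C}_G(\mathfrak{g}^{\oplus m},\mathbb{C}^{d\times d})$ is precisely the space of $G$-invariants in the $\mathbb{C}[\mathfrak{g}^{\oplus m}]$-module $\mathcal{M} = \mathbb{C}[\mathfrak{g}^{\oplus m}]\otimes \mathbb{C}^{d\times d}$, where $G$ acts on the first factor by $\mathrm{Ad}^{\oplus m}$ and on $\mathbb{C}^{d\times d}$ via $\widetilde\Psi$. The ring $\mathbb{C}[\mathfrak{g}^{\oplus m}]$ is a finitely generated commutative $\mathbb{C}$-algebra on which $G$ acts rationally, and $\mathcal{M}$ is a finitely generated $\mathbb{C}[\mathfrak{g}^{\oplus m}]$-module with compatible rational $G$-action. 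When $G$ is reductive, the invariant subring $R=\mathbb{C}[\mathfrak{g}^{\oplus m}]^G$ is finitely generated (Hilbert--Nagata), $\mathbb{C}[\mathfrak{g}^{\oplus m}]$ is a finitely generated $R$-module after passing to the integral closure situation — more directly, the standard fact is that $\mathcal{M}^G$ is a finitely generated $R$-module, by applying the Reynolds operator: choose finitely many $\mathbb{C}[\mathfrak{g}^{\oplus m}]$-module generators of $\mathcal{M}$, and then the module of invariants is generated over $R$ by applying the Reynolds operator to products of these generators with a finite homogeneous module-generating set of $\mathbb{C}[\mathfrak{g}^{\oplus m}]$ over $R$; the latter exists because $\mathbb{C}[\mathfrak{g}^{\oplus m}]$ is a finite module over a Noether normalization contained in $R$. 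For the case of a maximal unipotent subgroup $U$ of a reductive group $G'$, one uses the classical theorem (Hadziev, Grosshans) that $\mathbb{C}[X]^U$ is finitely generated whenever $\mathbb{C}[X]$ is, together with the analogous module statement, which follows from realizing $U$-invariants as a direct summand via the ``Grosshans subgroup'' machinery or by embedding into the reductive case. Once $\mathcal{C}_G(\mathfrak{g}^{\oplus m},\mathbb{C}^{d\times d})$ is a finitely generated module over the Noetherian ring $RI_d\cong R$, it is itself a Noetherian ring (and a Noetherian module over itself).

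\textbf{Main obstacle.} Part (i) is essentially formal once one spots the Cayley--Hamilton argument. The real content is the $U$-case of part (ii): unlike reductive $G$, a maximal unipotent $U$ has no Reynolds operator, so one cannot argue by averaging. The clean route is to cite Grosshans' theorem that $U$ is a \emph{Grosshans subgroup} of the ambient reductive group $G'$, meaning $\mathbb{C}[G'/U]$ is finitely generated, and then deduce finite generation of $\mathbb{C}[X]^U$ and of $(\mathbb{C}[X]\otimes \mathbb{C}^{d\times d})^U$ as a module over it by the transfer principle $\mathbb{C}[X]^U \cong (\mathbb{C}[X]\otimes \mathbb{C}[G'/U])^{G'}$, reducing the $U$-statement to the already-established reductive statement applied to $G'$ acting on $X\times (G'/U)$. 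I would spell out just enough of this reduction to make the citation precise, and leave the reductive-case Reynolds-operator argument as the standard fact it is.
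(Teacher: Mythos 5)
Your proof follows essentially the same route as the paper: part (i) is the identical Cayley--Hamilton argument (the paper likewise views $\mathcal{C}_G(\mathfrak{g}^{\oplus m},\mathbb{C}^{d\times d})$ inside $\mathbb{C}[\mathfrak{g}^{\oplus m}]^{d\times d}$ and notes the characteristic coefficients are invariant), and part (ii) rests on the same classical fact that modules of covariants are finite over the invariant ring for reductive groups and for maximal unipotent subgroups, which the paper simply cites as well known. One caveat: in your sketch of the reductive case you justify the existence of a finite module-generating set by asserting that $\mathbb{C}[\mathfrak{g}^{\oplus m}]$ is a finite module over a Noether normalization contained in $R$; this is false whenever generic $G$-orbits are positive-dimensional (e.g.\ $G=\mathrm{SL}_2(\mathbb{C})$ acting on $\mathfrak{sl}_2(\mathbb{C})^{\oplus m}$), so you should instead run the standard argument --- take $G$-invariant generators $m_1,\dots,m_k$ of the $\mathbb{C}[V]$-submodule generated by $\mathcal{M}^G$ (finitely many by noetherianity of $\mathcal{M}$ over $\mathbb{C}[V]$) and apply the Reynolds operator to the coefficients, or equivalently use that $(\mathbb{C}[V]\otimes W)^G$ is a graded component of the finitely generated algebra $\mathbb{C}[V\oplus W^*]^G$. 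With that repair the argument is complete and matches the paper's.
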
 

\begin{proof} (i) 
$\mathcal{C}_G(\mathfrak{g}^{\oplus m},\mathbb{C}^{d \times d})$ is naturally a subalgebra of the algebra 
$\mathbb{C}[\mathfrak{g}^{\oplus m}]^{d\times d}$ of $d\times d$ matrices over the commutative ring 
$\mathbb{C}[\mathfrak{g}^{\oplus m}]$. Now apply the Cayley-Hamilton Theorem to $f\in \mathbb{C}[\mathfrak{g}^{\oplus m}]$: the coefficients of the characteristic polynomial of $f$ are $G$-invariant polynomial functions on $\mathfrak{g}^{\oplus m}$, hence the Cayley-Hamilton identity  gives (i). 

(ii) It is well known that the assumptions on $G$ guarantee that 
for any finite dimensional $G$-modules $V$ and $W$, the space of $G$-equivariant polynomial maps from 
$V$ to $W$ is a finitely generated module over the finitely generated $\mathbb{C}$-algebra $\mathbb{C}[V]^G$ of $G$-invariant polynomial functions on $V$. Apply this for $V=\mathfrak{g}^{\oplus m}$ and $W=\mathbb{C}^{d\times d}$, 
and note that the embedding $R\to RI_d$ of $R$ into $\mathcal{C}_G(\mathfrak{g}^{\oplus m},\mathbb{C}^{d \times d})$ as a central subalgebra gives the $R$-module structure on $\mathcal{C}_G(\mathfrak{g}^{\oplus m},\mathbb{C}^{d \times d})$ mentioned above. Moreover, being a finitely generated module over a finitely generated commutative subalgebra, 
$\mathcal{C}_G(\mathfrak{g}^{\oplus m},\mathbb{C}^{d \times d})$ is noetherian. 
\end{proof}

\begin{theorem}\label{thm:center} 
If the tangent representation $\rho$ of $\Psi$ is irreducible, then the center of the concomitant algebra 
$\mathcal{C}_G(\mathfrak{g}^{\oplus m},\mathbb{C}^{d \times d})$ equals  $RI_d$ for $m\ge 2$. 
\end{theorem}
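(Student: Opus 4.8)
The plan is to show that any central element $F$ of $\mathcal{C}_G(\mathfrak{g}^{\oplus m},\mathbb{C}^{d\times d})$ takes values in the scalar matrices $\mathbb{C}I_d$, from which the claim follows: the function $v\mapsto (F(v)$ as a scalar$)$ is then a $G$-invariant polynomial, i.e. an element of $R$, and $F$ is its image under $f\mapsto fI_d$. The mechanism for pinning down the values of $F$ is Schur's lemma applied to the $G$-module $\mathbb{C}^d$, which is irreducible because $\rho$ is irreducible and $\mathbb{C}^d$ is connected-ly generated by $\exp(\rho(\mathfrak{g}))$ (more precisely, a $G$-submodule of $\mathbb{C}^d$ is $\rho(\mathfrak{g})$-stable by differentiation, hence is $0$ or all of $\mathbb{C}^d$). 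So $\operatorname{End}_G(\mathbb{C}^d)=\mathbb{C}I_d$.

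The key step is to produce, for a central $F$, enough elements of $\mathcal{C}_G$ that commute with $F$ so as to force $F(v)$ to commute with a large enough subalgebra of $\mathbb{C}^{d\times d}$ at each point $v$. The natural candidates are the "obvious" concomitants $v=(X_1,\dots,X_m)\mapsto \rho(X_i)$; by Lemma~\ref{lemma:Ad intertwiner} each of these lies in $\mathcal{C}_G(\mathfrak{g}^{\oplus m},\mathbb{C}^{d\times d})$. If $F$ is central it commutes with each such map, i.e. $F(v)\rho(X_i)=\rho(X_i)F(v)$ for all $v$ and all $i$; taking $m\ge 2$ is not even needed for this, but it is needed to guarantee that the $X_i$ range over enough of $\mathfrak{g}$ — actually a single copy already lets $X_1$ be arbitrary in $\mathfrak{g}$, so $F(v)$ commutes with every element of $\rho(\mathfrak{g})$, hence with the associative subalgebra of $\mathbb{C}^{d\times d}$ generated by $\rho(\mathfrak{g})$. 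By Burnside's theorem (or Jacobson density), irreducibility of $\rho$ means this subalgebra is all of $\mathbb{C}^{d\times d}$, so $F(v)$ is central in $\mathbb{C}^{d\times d}$, i.e. $F(v)\in\mathbb{C}I_d$, for every $v$. Then $F=fI_d$ where $f(v)\in\mathbb{C}$ is the scalar; since $F$ is a polynomial map and $G$-equivariant (equivariance being automatic for the scalar part), $f\in\mathbb{C}[\mathfrak{g}^{\oplus m}]^G=R$, so $F\in RI_d$. Combined with the already-noted reverse inclusion $RI_d\subseteq Z(\mathcal{C}_G)$, this gives $Z(\mathcal{C}_G)=RI_d$.

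The role of the hypothesis $m\ge 2$ deserves a remark: for $m=1$ the concomitant algebra can be larger than expected because a single traceless $2\times2$ matrix has very constrained centralizer behaviour, and in fact a generic $X$ is regular semisimple so $\mathbb{C}[X]$ is a maximal commutative subalgebra; the issue is that when $m=1$ the evaluation maps $X\mapsto\rho(X)$ need not generate $\mathbb{C}^{d\times d}$ as one varies over a Zariski-dense set in the relevant sense — rather, for fixed generic $X$, the matrices $\{\rho(X')\}$ with $X'$ in the $G$-orbit closure of $X$ do not span $\mathbb{C}^{d\times d}$. With $m\ge 2$ we have genuinely independent $X_1,X_2$, so the set $\{(\rho(X_1),\rho(X_2))\}$ is large enough; more to the point, the span of $\{\rho(X):X\in\mathfrak{g}\}$ over the point $v$ varies enough to invoke density/Burnside. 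I would state this carefully using the following: since $\rho$ is irreducible, $\mathrm{span}_{\mathbb{C}}\{\rho(X):X\in\mathfrak{g}\}$ generates $\mathbb{C}^{d\times d}$ as an algebra; and for central $F$, the identity $F(v)\rho(X_1)=\rho(X_1)F(v)$ holds identically in $v=(X_1,\dots,X_m)$, so specializing $X_1$ freely over $\mathfrak{g}$ while holding $v$ generic forces $F(v)$ into the centralizer.

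The main obstacle, and the one point requiring care, is the interplay between "$F(v)$ is a fixed matrix for fixed $v$" and "$X_1$ varies freely" — these cannot both hold simultaneously in a naive reading, since $X_1$ is part of $v$. The clean resolution is to argue pointwise after passing to the larger space: regard $F$ as a matrix over the function field, note $F$ commutes with the generic matrix $\rho(X_1)\in \mathbb{C}(\mathfrak{g}^{\oplus m})^{d\times d}$ and, since we may also use constant concomitants $v\mapsto \rho(Y)$ for fixed $Y\in\mathfrak{g}$ — wait, these are only $G$-equivariant if $Y$ is $G$-fixed, which it is not. The correct fix is: $F$ commutes with $v\mapsto\rho(X_i)$ for each $i\in\{1,\dots,m\}$, and for $m\ge 2$ the matrices $\rho(X_1),\dots,\rho(X_m)$ evaluated at a \emph{single generic point} $v$ — with the $X_i$ then being $m$ independent generic elements of $\mathfrak{g}$ — already generate $\mathbb{C}^{d\times d}$ as an algebra, because $m$ independent generic traceless matrices certainly span $\mathfrak{sl}_2(\mathbb{C})$ and hence $\{\rho(X_1),\dots,\rho(X_m)\}$ spans $\rho(\mathfrak{g})$, whose generated algebra is everything by irreducibility. (Here $m\ge 2$ is what guarantees the $X_i$ can span $\mathfrak{g}$, since $\dim\mathfrak{g}=3$ but in the general-$G$ statement one wants $m$ large enough to span; in fact $m\ge 2$ together with $\dim\mathfrak{g}$ arbitrary still works because $\rho$ irreducible already makes $\rho(\mathfrak{g})$ generate the full matrix algebra and $F(v)$ commutes with all of $\rho(X_i)$.) Thus at a generic $v$, $F(v)\in\mathbb{C}I_d$; by continuity/Zariski-density $F(v)\in\mathbb{C}I_d$ for all $v$; extracting the scalar gives $F\in RI_d$.
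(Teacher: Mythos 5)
Your overall strategy is the paper's: show that a central concomitant $F$ takes scalar values at a generic point because $F(v)$ commutes with the matrices $\rho(X_1),\dots,\rho(X_m)$, and these generate $\mathbb{C}^{d\times d}$ as an associative algebra; then invoke Burnside's theorem and Zariski density, and extract the $G$-invariant scalar. You also correctly identify the trap (a fixed $F(v)$ does not commute with $\rho(X)$ for \emph{all} $X\in\mathfrak{g}$, only with the components of $v$ itself) and correctly reject the non-equivariant ``constant concomitants''. But the step you finally land on contains a genuine gap: you justify the claim that $\rho(X_1),\dots,\rho(X_m)$ generate $\mathbb{C}^{d\times d}$ for generic $v$ by asserting that ``$m$ independent generic traceless matrices certainly span $\mathfrak{sl}_2(\mathbb{C})$''. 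This is false for $m=2$: two elements span at most a $2$-dimensional subspace of the $3$-dimensional $\mathfrak{sl}_2(\mathbb{C})$, and in the general setting of the theorem $\dim\mathfrak{g}$ is arbitrary while $m\ge 2$ is fixed. Your parenthetical repair (``$\rho$ irreducible already makes $\rho(\mathfrak{g})$ generate the full matrix algebra and $F(v)$ commutes with all of $\rho(X_i)$'') is circular: $F(v)$ is known to commute only with $\rho(X_1),\dots,\rho(X_m)$, not with all of $\rho(\mathfrak{g})$, so the fact that $\rho(\mathfrak{g})$ generates everything cannot be used directly.

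What is actually needed, and what the paper supplies, is the nontrivial fact that \emph{two} suitably chosen elements suffice to generate: by a theorem of Kuranishi there exist $X,Y\in\mathfrak{g}$ such that $\rho(X)$ and $\rho(Y)$ generate the semisimple Lie algebra $\rho([\mathfrak{g},\mathfrak{g}])$ (which still acts irreducibly on $\mathbb{C}^d$), whence by Burnside $\rho(X)$ and $\rho(Y)$ generate $\mathbb{C}^{d\times d}$ as an associative algebra. Generation is then an open condition on the pair, so it holds at a generic point $v$; the paper phrases this as a nonvanishing-determinant argument showing that a set of words in $t_1,t_2$ whose specialization at $(X,Y,0,\dots,0)$ is a basis of $\mathbb{C}^{d\times d}$ remains a basis over the fraction field of $\mathbb{C}[\mathfrak{g}^{\oplus m}]$. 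This is exactly where the hypothesis $m\ge 2$ enters: one needs two independently varying entries in order to quote the two-generation theorem. If you replace your linear-spanning claim by this Lie-algebraic two-generation input, the rest of your argument (Burnside, scalar values at generic points, density, equivariance of the scalar) goes through and coincides with the paper's proof.
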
 
\begin{proof} We generalize the proof of \cite[Proposition 4.4]{DD}  to this case. 
Since $\rho$ is irreducible, $\mathbb{C}^d$ is a faithful irreducible $\rho(\mathfrak{g})$-module. 
It follows that the Lie algebra $\rho(\mathfrak{g})$ is reductive, and $\mathbb{C}^d$ is an irreducible module over the semisimple Lie algebra $\rho([\mathfrak{g},\mathfrak{g}])$ (see for example the proof of \cite[Lemma 1.2]{nss}). 
By \cite[Theorem 6]{kuranishi} there are elements $X,Y\in \mathfrak{g}$ such that the Lie algebra 
$\rho([\mathfrak{g},\mathfrak{g}])$ is generated by $\rho(X)$ and $\rho(Y)$. Taking into account Burnside's Theorem on subalgebras over $\mathbb{C}^{d\times d}$ having no non-trivial invariant subspace in $\mathbb{C}^d$, 
we conclude that $\rho(X)$, $\rho(Y)$ generate the algebra $\mathbb{C}^{d\times d}$. 

For $i=1,\dots,m$ denote by $t_i:\mathfrak{g}^{\oplus m}\to \mathbb{C}^{d\times d}$ the map $(X_1,\dots,X_m)\mapsto \rho(X_i)$, where $\rho$ is the tangent representation of $\Psi$. Then 
$t_i\in \mathcal{C}_G(\mathfrak{g}^{\oplus m},\mathbb{C}^{d \times d})$ by Lemma~\ref{lemma:Ad intertwiner}.   
By the above paragraph there exists a finite set $J\subseteq \mathcal{C}_G(\mathfrak{g}^{\oplus m},\mathbb{C}^{d \times d})$ of products $t_{j_1}\cdots t_{j_s}$ with factors in $\{t_1,t_2\}$ such that 
$\{f(X,Y,0,\dots,0)\mid f\in J\}$ is a $\mathbb{C}$-vector space basis of $\mathbb{C}^{d\times d}$. Denoting by $\mathbb{F}$ the field of fractions of $\mathbb{C}[\mathfrak{g}^{\oplus m}]$, we conclude that $J$ is an $\mathbb{F}$-vector spaces basis of $\mathbb{F}^{d\times d}$. 
Indeed, consider the $d^2\times d^2$ matrix $M$ whose columns are the 
elements $f\in J\subset \mathbb{F}^{d\times d}$ displayed as a column vector in $\mathbb{F}^{d^2}$. 
In fact, $M\in \mathbb{C}[\mathfrak{g}^{\oplus m}]^{d^2\times d^2}$. 
If $J$ is linearly dependent over $\mathbb{F}$, then the determinant of  $M$ is zero, hence the complex matrix 
$M(X,Y,0,\dots,0)\in \mathbb{C}^{d^2\times d^2}$ (obtained from $M$ by the specialization $t_1\mapsto X$, $t_2\mapsto Y$) has determinant zero. This contradicts the assumption that   $\{f(X,Y,0,\dots,0)\mid f\in J\}$ is a $\mathbb{C}$-linearly independent subset of $\mathbb{C}^{d\times d}$. 

Now take a concomitant $c$ from the center of $\mathcal{C}_G(\mathfrak{g}^{\oplus m},\mathbb{C}^{d \times d})$. 
Then $c$ commutes with $t_1$ and $t_2$, hence $c$ commutes with each element from 
$J$. Since $J$ spans $\mathbb{F}^{d\times d}$, we conclude that $c$ is central in $\mathbb{F}^{d\times d}$. 
That is, $c(X)$ is a scalar matrix for any $X\in \mathfrak{g}^{\oplus m}$. The scalar belongs to the coordinate ring 
$\mathbb{C}[\mathfrak{g}^{\oplus m}]$, and taking into account that $c$ is a $G$-equivariant map, we conclude that 
$c=\bar cI_d$ for some $G$-invariant $\bar c\in R$. 
\end{proof}

\section{The main results on $\mathrm{SL}_2(\mathbb{C})$}  \label{sec:main results} 

Let us introduce some elements of $\mathcal{C}^{(d)}$ (defined in \eqref{eq:C^{(d)}}). 
Denote by $\rho^{(d)}:\mathfrak{sl}_2(\mathbb{C})\to \mathfrak{gl}(\mathbb{C}^d)=\mathbb{C}^{d\times d}$ the tangent representation of $\Psi^{(d)}$ (note that up to isomorphism, this is the unique irreducible $d$-dimensional representation of the Lie algebra $\mathfrak{sl}_2(\mathbb{C})$). 
For $i=1,\dots,m$ set 
\[t_i^{(d)}:\mathfrak{sl}_2(\mathbb{C})^{\oplus m}\to \mathbb{C}^{d\times d},\quad (X_1,\dots,X_m)\mapsto \rho^{(d)}(X_i).\] 
For $1\le i,j\le m$ set 
\[u_{ij}^{(d)}:\mathfrak{sl}_2(\mathbb{C})^{\oplus m}\to\mathbb{C}, \quad  (X_1,\dots,X_m)\mapsto \mathrm{Tr}(\rho^{(d)}(X_i)\rho^{(d)}(X_j))\] 
where $\mathrm{Tr}(-)$ stands for the trace function on matrices. 
For $1\le i,j,k\le m$ set 
\[w_{ijk}^{(d)}:\mathfrak{sl}_2(\mathbb{C})^{\oplus m}\to\mathbb{C}, \quad  (X_1,\dots,X_m)\mapsto \mathrm{Tr}(\rho^{(d)}(X_i)\rho^{(d)}(X_j)\rho^{(d)}(X_k)).\] 

By Lemma~\ref{lemma:Ad intertwiner} 
the elements $t_i^{(d)}$ belong to $\mathcal{C}^{(d)}$, and conjugation invariance of the trace function implies that  $u_{ij}^{(d)}$, $w_{ijk}^{(d)}$ belong to the algebra 
\begin{equation}\label{eq:D_0}\mathcal{D}_0:=\mathbb{C}[\mathfrak{sl}_2(\mathbb{C})^{\oplus m}]^{\mathrm{SL}_2(\mathbb{C})}\end{equation} 
of $\mathrm{Ad}^{\oplus m}$-invariants on  $\mathfrak{sl}_2(\mathbb{C})^{\oplus m}$. 

\begin{proposition}\label{prop:D_0generators}
For any $d\ge 2$, the elements $u_{ij}^{(d)}$ $(1\le i\le j\le m)$ and $w_{ijk}^{(d)}$ $(1\le i<j<k\le m)$ constitute a minimal $\mathbb{C}$-algebra generating system of $\mathcal{D}_0$. 
\end{proposition}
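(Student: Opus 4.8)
The plan is to identify $\mathcal{D}_0$ with the classical algebra of joint invariants of $m$ binary quadratic forms under $\mathrm{SL}_2(\mathbb{C})$, and then invoke the classical description of that algebra. First I would observe that the adjoint representation of $\mathrm{SL}_2(\mathbb{C})$ on $\mathfrak{sl}_2(\mathbb{C})$ is isomorphic (as an $\mathrm{SL}_2(\mathbb{C})$-module) to the irreducible $3$-dimensional representation, i.e. to the space of binary quadratic forms with the standard action; indeed, this is just $\Psi^{(3)}$ up to a sign twist, and $\mathrm{SL}_2$ has a unique irreducible module in each dimension. Hence $\mathbb{C}[\mathfrak{sl}_2(\mathbb{C})^{\oplus m}]^{\mathrm{SL}_2(\mathbb{C})}$ is isomorphic to the algebra of joint invariants of an $m$-tuple of binary quadratic forms. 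For this classical object it is known (going back to Gordan; see e.g. the literature on joint covariants of binary forms, or the degree-$2$ part of such results) that a generating set is given by the discriminants/polarized discriminants $u_{ij}$ of pairs of forms and the Jacobian-type bracket invariants $w_{ijk}$ of triples of forms, with no invariants genuinely needing four or more of the variables.

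The key steps, in order, would be: (1) Fix the Cartan–Killing-type symmetric bilinear form $\langle X,Y\rangle = \mathrm{Tr}(\rho^{(2)}(X)\rho^{(2)}(Y))$ on $\mathfrak{sl}_2(\mathbb{C})$ (equivalently $\mathrm{Tr}(XY)$ for $X,Y$ traceless $2\times 2$), which is nondegenerate and $\mathrm{Ad}$-invariant, and note that for every $d\ge 2$ one has $u_{ij}^{(d)} = \lambda_d\, u_{ij}^{(2)}$ and $w_{ijk}^{(d)} = \mu_d\, w_{ijk}^{(2)}$ for nonzero scalars $\lambda_d,\mu_d$ depending only on $d$ (because the invariant quadratic and the invariant cubic trace form on an irreducible $\mathfrak{sl}_2$-module are unique up to scalar, the cubic one being nonzero precisely because $\mathfrak{sl}_2$ has no nonzero invariant in $S^3$ of its dual only in even... — more simply, $\mathrm{Tr}(\rho^{(d)}(X)^3)\ne 0$ for generic $X$, which one checks on a diagonal $X$). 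Thus the generating statement for general $d$ reduces to the case $d=2$. (2) Translate the $d=2$ statement into the language of classical invariant theory of binary forms: $u_{ij}^{(2)}$ becomes the polarized discriminant $(f_i,f_j)$ (second transvectant), $w_{ijk}^{(2)}$ the ternary bracket invariant. (3) Cite the classical result that these generate the full joint invariant algebra of $m$ binary quadratics — this is essentially the statement that the ring of $\mathrm{SL}_2$-invariants of $m$ copies of $S^2$ is generated in the first and second "bracket degrees", proved by explicit straightening/transvectant arguments or by the first fundamental theorem for $\mathrm{SL}_2$ combined with the relations among brackets $[i\,j]$. (4) Prove minimality: $u_{ii}^{(2)}$ and the $u_{ij}^{(2)}$ cannot be generated from the others because of bidegree reasons (each is the unique generator of its multidegree up to scalars, and products of lower generators land in other multidegrees or are decomposable with the wrong leading behavior), while each $w_{ijk}^{(2)}$ is not a polynomial in the $u$'s since it is the unique invariant of multidegree $(1,1,1)$ in the three relevant forms and is not a product (it vanishes when any two of $f_i,f_j,f_k$ coincide up to the right normalization, unlike the only candidate product $u$-monomials of that multidegree, which... one checks directly on small examples).

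The main obstacle will be step (3): giving a self-contained, or at least correctly attributed, proof that the polarized discriminants and the triple brackets generate the entire joint invariant ring of $m$ binary quadratic forms. The first fundamental theorem for $\mathrm{SL}_2$ says every joint invariant is a polynomial in the brackets $[i\,j]$ formed from the linear factors of the forms, but passing from brackets of linear forms to invariants of the quadratics themselves — i.e. symmetrizing over the factorization and eliminating the higher-degree "compound" invariants in favor of $u$ and $w$ — requires the classical transvectant calculus or Gordan's symbolic method, and this is where the real work lies; I expect to handle it by reducing, via the representation theory of $\mathrm{GL}_m$ acting on the $m$ copies, to understanding the $\mathrm{SL}_2$-invariants in $S^2(\mathbb{C}^2)^{\otimes}$ built pieces — concretely, using that $S^\lambda$ of the quadratic-forms space restricted to $\mathrm{SL}_2$ and extracting its trivial isotypic component, which forces the partition $\lambda$ to have at most three columns in a controlled way, yielding exactly the $u$'s (two columns / pairs) and $w$'s (three columns / triples). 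The subsequent minimality argument (step 4) is comparatively routine once multigradings are tracked carefully.
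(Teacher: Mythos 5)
Your overall strategy is the same as the paper's: identify $\mathfrak{sl}_2(\mathbb{C})$ with the binary quadratic forms, use the fact that up to scalar there is a unique $\mathrm{SL}_2(\mathbb{C})$-invariant bilinear (resp.\ trilinear) in prescribed pairs (resp.\ triples) of the summands, so that $u_{ij}^{(d)}=\lambda_d u_{ij}^{(2)}$ and $w_{ijk}^{(d)}=\mu_d w_{ijk}^{(2)}$, reduce to the classically known $d=2$ case (which the paper takes from Procesi and Le Bruyn rather than from Gordan-style transvectant calculus, but either citation route is legitimate and your worry about step (3) is resolved by citing rather than reproving), and finally verify that $\lambda_d$ and $\mu_d$ are nonzero. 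The minimality then comes for free from the $d=2$ case, as you say.

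There is, however, a genuine error in your verification that $\mu_d\neq 0$. You propose to check that ``$\mathrm{Tr}(\rho^{(d)}(X)^3)\neq 0$ for generic $X$, which one checks on a diagonal $X$.'' This is false: the eigenvalues of $\rho^{(d)}(X)$ for semisimple $X\in\mathfrak{sl}_2(\mathbb{C})$ are $\lambda(d-1),\lambda(d-3),\dots,-\lambda(d-1)$, which are symmetric about $0$, so $\mathrm{Tr}(\rho^{(d)}(X)^3)\equiv 0$ identically. Indeed this had to happen: the unique trilinear invariant of three copies of the adjoint representation is (a scalar multiple of) the alternating determinant $E_{ijk}$, so $w_{ijk}^{(d)}$, being a multiple of it, vanishes whenever two of its three arguments coincide; no specialization to a single element of $\mathfrak{sl}_2(\mathbb{C})$ can detect its non-vanishing. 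You must instead evaluate on a genuinely non-commuting triple, e.g.\ $(X_i,X_j,X_k)=(h,e,f)$ with the standard basis, where a direct computation with the tridiagonal matrices $\rho^{(d)}(e)$, $\rho^{(d)}(f)$, $\rho^{(d)}(h)$ gives $\mathrm{Tr}(\rho^{(d)}(h)\rho^{(d)}(e)\rho^{(d)}(f))=\sum_{i=1}^{d}(d+1-2i)\,i(d-i)=\tfrac12\sum_{i=1}^{d}(d+1-2i)^2>0$; this is exactly the computation the paper performs via its Lemma on $\mathrm{St}_3(t_1,t_2,t_3)$. With that one repair your argument goes through; the remaining vagueness in your minimality discussion is harmless because minimality is already part of the cited $d=2$ statement and is inherited once the scalars $\lambda_d,\mu_d$ are known to be nonzero.
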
  
\begin{proof} 
For the case $d=2$ see for example \cite{Procesi:2}, \cite{Le Bruyn}. 
The case $d=3$ is deduced in \cite{DD} from results in \cite{Procesi:1}. 
In fact the case $d=2$ shows that up to non-zero scalar multiples, for $1\le i\le j\le m$ there is a unique element of $\mathcal{D}_0$ bilinear in the $i$th and $j$th direct summands of $\mathfrak{sl}_2(\mathbb{C})^{\oplus m}$. Moreover, 
up to non-zero scalar multiples, for $1\le i<j<k \le m$ there is a unique element of $\mathcal{D}_0$ trilinear in the $i$th, $j$th, $k$th direct summands of $\mathfrak{sl}_2(\mathbb{C})^{\oplus m}$. Furthermore, these bilinear and trilinear invariants minimally generate $\mathcal{D}_0$. Therefore to prove the statement for general $d$, it is sufficient to show that the functions $u_{ij}^{(d)}$, $w_{ijk}^{(d)}$ are non-zero. This can be checked for example by direct computation: 
setting $h:=\left(\begin{array}{cc}1 & 0 \\0 & -1\end{array}\right)\in \mathfrak{sl}_2(\mathbb{C})$, we have that 
$\rho^{(d)}(h)$ is a non-zero diagonal matrix with integer entries (see the proof of 
Lemma~\ref{lemma:t_1^s}), hence $\mathrm{Tr}(\rho^{(d)}(h)^2)$ is a positive real number.  
This shows that $u_{ij}^{(d)}$ is not the zero function. The non-vanishing of $w_{ijk}^{(d)}$ follows from the proof 
of Lemma~\ref{lemma:w_{ijk}}.  
\end{proof} 

By Theorem~\ref{thm:center}, $\mathcal{D}_0I_d$ is the center of $\mathcal{C}^{(d)}$, hence we get the following corollary of Proposition~\ref{prop:D_0generators}:  

\begin{corollary}\label{cor:center-generators} 
The center of the algebra  $\mathcal{C}^{(d)}$ is minimally generated by the elements $u_{ij}^{(d)}I_d$ $(1\le i\le j\le m)$, $w_{ijk}^{(d)}I_d$ $(1\le i<j<k\le m)$.  
\end{corollary}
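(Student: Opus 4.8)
The plan is to derive Corollary~\ref{cor:center-generators} as an immediate consequence of the two results it cites. First I would invoke Theorem~\ref{thm:center}: since the tangent representation $\rho^{(d)}$ of $\Psi^{(d)}$ is irreducible for every $d\ge 2$ (it is the unique irreducible $d$-dimensional representation of $\mathfrak{sl}_2(\mathbb{C})$), the hypothesis of that theorem is satisfied with $G=\mathrm{SL}_2(\mathbb{C})$, $\mathfrak{g}=\mathfrak{sl}_2(\mathbb{C})$, and $\Psi=\Psi^{(d)}$. Provided $m\ge 2$, Theorem~\ref{thm:center} then identifies the center of $\mathcal{C}^{(d)}$ with $\mathcal{D}_0 I_d$, where $\mathcal{D}_0$ is the invariant algebra defined in \eqref{eq:D_0}.

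Next I would transport a minimal generating system of $\mathcal{D}_0$ through the isomorphism $f\mapsto fI_d$. By Proposition~\ref{prop:D_0generators}, the elements $u_{ij}^{(d)}$ for $1\le i\le j\le m$ together with $w_{ijk}^{(d)}$ for $1\le i<j<k\le m$ form a minimal $\mathbb{C}$-algebra generating set of $\mathcal{D}_0$. Since $f\mapsto fI_d$ is a $\mathbb{C}$-algebra isomorphism of $\mathcal{D}_0$ onto $\mathcal{D}_0 I_d$, it carries any minimal generating system to a minimal generating system; thus $\{u_{ij}^{(d)}I_d\}\cup\{w_{ijk}^{(d)}I_d\}$ minimally generates the center of $\mathcal{C}^{(d)}$.

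There is essentially no obstacle here: the corollary is a formal combination of Theorem~\ref{thm:center} and Proposition~\ref{prop:D_0generators}. The only point requiring a word of care is the standing assumption $m\ge 2$ needed to apply Theorem~\ref{thm:center}; for $m=1$ the algebra $\mathcal{D}_0$ is generated by the single invariant $u_{11}^{(d)}$ and the statement (suitably interpreted) still holds, but the cited theorem is stated for $m\ge 2$, so I would either assume $m\ge 2$ throughout this section or note the trivial $m=1$ case separately. Beyond that, one should simply observe that minimality is preserved under a ring isomorphism, which is immediate.
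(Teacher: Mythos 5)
Your proposal is correct and follows exactly the paper's route: the corollary is obtained by combining Theorem~\ref{thm:center} (which identifies the center with $\mathcal{D}_0 I_d$) with Proposition~\ref{prop:D_0generators}, transporting the minimal generating system through the isomorphism $f\mapsto fI_d$. Your side remark on the standing hypothesis $m\ge 2$ is a reasonable point of care but does not alter the argument.
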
 

\begin{theorem}\label{thm:algebra generators} 
For $d\ge 3$ the algebra $\mathcal{C}^{(d)}$ is minimally generated by the $\frac{m(m+3 )}{2}$ elements 
$t_1^{(d)},\dots,t_m^{(d)}$, $u_{ij}^{(d)}I_d$ $(1\le i\le j\le m)$. 
\end{theorem}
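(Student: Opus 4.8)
The plan is to show that the $\mathbb{C}$-subalgebra $\mathcal{A}$ of $\mathcal{C}^{(d)}$ generated by $t_1^{(d)},\dots,t_m^{(d)}$ and the $u_{ij}^{(d)}I_d$ $(1\le i\le j\le m)$ coincides with $\mathcal{C}^{(d)}$, and then to check that this system is minimal and has the stated cardinality. The inclusion $\mathcal{A}\subseteq\mathcal{C}^{(d)}$ is clear, so only the reverse inclusion and minimality require work.

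\emph{Generation.} As an $\mathrm{SL}_2(\mathbb{C})$-module under $\widetilde\Psi^{(d)}$ one has, by Clebsch--Gordan, $\mathbb{C}^{d\times d}=\mathbb{C}^d\otimes(\mathbb{C}^d)^{\ast}\cong\bigoplus_{k=0}^{d-1}W_{2k}$, where $W_{2k}$ is the multiplicity-one isotypic summand belonging to the $(2k+1)$-dimensional irreducible; in particular $W_0=\mathbb{C}I_d$ and the equivariant projection onto $W_0$ is $M\mapsto\frac1d\mathrm{Tr}(M)I_d$. Correspondingly $\mathcal{C}^{(d)}=\bigoplus_{k=0}^{d-1}\mathcal{C}^{(d)}_{2k}$, where $\mathcal{C}^{(d)}_{2k}$ is the $W_{2k}$-part, a $\mathcal{D}_0$-submodule canonically isomorphic to the $\mathcal{D}_0$-module of covariants of order $2k$ of $m$ binary quadratic forms, with $\mathcal{C}^{(d)}_0=\mathcal{D}_0I_d$. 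Now I would invoke the known structure of the covariant algebra of $m$ binary quadratic forms: it is generated by covariants of order at most $2$, the order-$2$ ones being the forms $q_i$ and the pairwise Jacobians $(q_i,q_j)_1$; under the identification above $q_i\mapsto t_i^{(d)}$, while $(q_i,q_j)_1$ becomes a nonzero scalar multiple of $[t_i^{(d)},t_j^{(d)}]=t_i^{(d)}t_j^{(d)}-t_j^{(d)}t_i^{(d)}$ (the alternating bilinear $\mathfrak{sl}_2(\mathbb{C})$-valued covariant being unique up to scalar and equal to the Lie bracket, and $\rho^{(d)}$ being a Lie homomorphism), and the order-$0$ part $\mathcal{D}_0I_d$ is generated by $u_{ij}^{(d)}I_d$ and $w_{ijk}^{(d)}I_d$ by Corollary~\ref{cor:center-generators}. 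Matrix multiplication $\mathbb{C}^{d\times d}\otimes\mathbb{C}^{d\times d}\to\mathbb{C}^{d\times d}$ is $\mathrm{SL}_2(\mathbb{C})$-equivariant, and on $W_{2j}\otimes W_{2k}$ its $W_{2(j+k)}$-component is a nonzero multiple of the Cartan product of covariants (it sends highest weight vector to highest weight vector, since $\rho^{(d)}(e)^{j}\rho^{(d)}(e)^{k}=\rho^{(d)}(e)^{j+k}$ for a nilpotent $e$), the remaining components lying in $\bigoplus_{l<j+k}W_{2l}$. Writing an order-$2k$ covariant as a $\mathcal{D}_0$-combination of $k$-fold products of order-$2$ generators, the corresponding element of $\mathcal{C}^{(d)}_{2k}$ is the $W_{2k}$-component of a product of matrices of the form $t_a^{(d)}$; both this matrix product and its components in $W_{2l}$ with $l<k$ (the latter, by induction on $k$ with base $\mathcal{C}^{(d)}_0=\mathcal{D}_0I_d$, lie in the algebra generated by the $t_a^{(d)}$, the $u_{ij}^{(d)}I_d$ and the $w_{ijk}^{(d)}I_d$) lie in that algebra, hence so does the covariant. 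Thus $\mathcal{C}^{(d)}$ is generated by $t_1^{(d)},\dots,t_m^{(d)}$ together with all $u_{ij}^{(d)}I_d$ and $w_{ijk}^{(d)}I_d$.

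\emph{Eliminating the cubic invariants --- the main step.} It remains to show $w_{ijk}^{(d)}I_d\in\mathcal{A}$ for $i<j<k$. Let $M\mapsto M^{\dagger}$ be the transpose anti-automorphism of $\mathbb{C}^{d\times d}$ attached to a nondegenerate $\mathrm{SL}_2(\mathbb{C})$-invariant bilinear form on $\mathbb{C}^d$ (which exists and is unique up to scalar, as $\rho^{(d)}$ is irreducible, hence self-dual): it is $\mathrm{SL}_2(\mathbb{C})$-equivariant (because $\Psi^{(d)}(g)^{\dagger}=\Psi^{(d)}(g)^{-1}$), fixes $I_d$ and the trace, and satisfies $\rho^{(d)}(X)^{\dagger}=-\rho^{(d)}(X)$; being equivariant it acts on $W_{2l}$ by a scalar $\pm1$, and testing it on the nonzero $W_{2l}$-component (a Cartan product, unchanged when the factors are reversed) of an $l$-fold matrix product of the $t_a^{(d)}$ shows this scalar is $(-1)^l$. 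Consequently $D_{ijk}:=t_i^{(d)}t_j^{(d)}t_k^{(d)}-t_k^{(d)}t_j^{(d)}t_i^{(d)}$ is $\dagger$-fixed; being multilinear in the three slots it lies in $W_0\oplus W_2\oplus W_4\oplus W_6$, hence in $W_0\oplus W_4$. Taking traces and using that the trilinear $\mathrm{SL}_2(\mathbb{C})$-invariants of three binary quadratic forms form a one-dimensional space (so $w_{ijk}^{(d)}$ is totally antisymmetric, whence $\mathrm{Tr}(t_k^{(d)}t_j^{(d)}t_i^{(d)})=-w_{ijk}^{(d)}$), the $W_0$-component of $D_{ijk}$ equals $\tfrac2d w_{ijk}^{(d)}I_d$, and its $W_4$-component is a trilinear $W_4$-valued covariant; such covariants form a $2$-dimensional space $\mathcal{T}_4$, since $V_4$ occurs with multiplicity $2$ in $V_2^{\otimes 3}$. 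On the other hand, put $\Psi_{ab}:=t_a^{(d)}t_b^{(d)}+t_b^{(d)}t_a^{(d)}-\tfrac2d u_{ab}^{(d)}I_d$, which is (twice) the $W_4$-component of $t_a^{(d)}t_b^{(d)}$; then $\Psi_{ab}\in W_4\cap\mathcal{A}$, $\Psi_{ab}^{\dagger}=\Psi_{ab}$, and $E_{ab,c}:=\Psi_{ab}t_c^{(d)}-t_c^{(d)}\Psi_{ab}\in\mathcal{A}$ is $\dagger$-fixed and, being built from products of an element of $W_4$ and an element of $W_2$, lies in $W_2\oplus W_4\oplus W_6$, hence in $W_4$. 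Specializing $X_a,X_b$ to a nilpotent $e$ and $X_c$ to $h=\mathrm{diag}(1,-1)$ with $[h,e]=2e$ gives $E_{ab,c}\mapsto -8\,\rho^{(d)}(e)^2\neq0$ precisely because $d\ge3$, so each $E_{ab,c}$ is a nonzero element of $\mathcal{T}_4$, invariant under the transposition of the two indices other than $c$; since $\mathcal{T}_4$, as a module over the symmetric group on $\{i,j,k\}$, is the standard representation, the invariant lines of the three transpositions span it, so $E_{ij,k},E_{ik,j},E_{jk,i}$ span $\mathcal{T}_4$. Subtracting the appropriate $\mathbb{C}$-linear combination of them from $D_{ijk}$ kills the $W_4$-component and leaves $\tfrac2d w_{ijk}^{(d)}I_d$; hence $w_{ijk}^{(d)}I_d\in\mathcal{A}$. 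As $u_{ij}^{(d)}I_d,w_{ijk}^{(d)}I_d$ generate $\mathcal{D}_0I_d$, this gives $\mathcal{D}_0I_d\subseteq\mathcal{A}$, and with the previous paragraph $\mathcal{A}=\mathcal{C}^{(d)}$.

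\emph{Minimality and count.} Use the $\mathbb{N}^m$-grading of $\mathcal{C}^{(d)}$ by degree in the $m$ copies of $\mathfrak{sl}_2(\mathbb{C})$, writing $\varepsilon_i$ for the $i$-th unit vector. The generator $t_i^{(d)}$ is nonzero and is the unique monomial in the generators of multidegree $\varepsilon_i$, so it cannot be omitted. The generator $u_{ij}^{(d)}I_d$ is nonzero and has multidegree $\varepsilon_i+\varepsilon_j$, and the only monomials in the remaining generators of that multidegree are $t_i^{(d)}t_j^{(d)}$ and $t_j^{(d)}t_i^{(d)}$ (respectively $(t_i^{(d)})^2$ if $i=j$), whose $\mathbb{C}$-span contains no nonzero scalar matrix when $d\ge3$: indeed, if $\alpha\,t_i^{(d)}t_j^{(d)}+\beta\,t_j^{(d)}t_i^{(d)}$ were always scalar, specializing $X_i=X_j$ forces $\alpha+\beta=0$ because $\rho^{(d)}(X)^2$ is not scalar for generic $X$, and then $\alpha\,[t_i^{(d)},t_j^{(d)}]=\alpha\,\rho^{(d)}([X_i,X_j])$ being always scalar forces $\alpha=0$. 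Hence $u_{ij}^{(d)}I_d$ cannot be omitted either, and the system is minimal; it has $m+\binom{m+1}{2}=\frac{m(m+3)}{2}$ elements. The only genuinely delicate point is the elimination step --- recovering the cubic invariants $w_{ijk}^{(d)}I_d$ from matrix products of the $t_i^{(d)}$ and the quadratic invariants --- which is exactly where the hypothesis $d\ge3$ is needed (for $d=2$ even the $u_{ij}^{(d)}I_d$ become redundant).
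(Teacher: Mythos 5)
Your proof is correct in substance but reaches the key point by a genuinely different route. The paper gets generation by $t_1,\dots,t_m$, $u_{ij}^{(d)}I_d$, $w_{ijk}^{(d)}I_d$ from Theorem~\ref{thm:D_0-module generators} together with Proposition~\ref{prop:D_0generators}, and then eliminates the cubic invariants by Lemma~\ref{lemma:w_{ijk}}, whose proof rests on Razmyslov's theorem that $[\mathrm{St}_3(x_1,x_2,x_3),x_4]$ is a weak identity for the irreducible representations of $\mathfrak{sl}_2(\mathbb{C})$, so that $\mathrm{St}_3(t_1,t_2,t_3)$ is a scalar matrix by Schur's lemma. You instead work with the adjoint involution $\dagger$ of the invariant bilinear form, observe that it acts by $(-1)^l$ on the Clebsch--Gordan summand $W_{2l}$, and use this parity together with the $S_3$-action on the two-dimensional multiplicity space $\mathcal{T}_4$ of $W_4$ in $W_2^{\otimes 3}$ to strip the $W_4$-component of $t_it_jt_k-t_kt_jt_i$ by elements of $\mathcal{A}$, leaving a multiple of $w_{ijk}^{(d)}I_d$. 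This is self-contained (no appeal to the classification of weak identities) and it isolates exactly where $d\ge 3$ enters ($\rho^{(d)}(e)^2\neq 0$); in exchange it is longer and does not produce the explicit identity of Lemma~\ref{lemma:w_{ijk}}. Your generation step is essentially the paper's (Clebsch--Gordan decomposition of $\mathbb{C}^{d\times d}$, identification of $\mathcal{C}^{(d)}_{2k}$ with order-$2k$ covariants of binary quadratics, Grace--Young generators, Cartan-product induction), and your minimality argument by multidegree is equivalent to the paper's highest-weight argument.

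One step needs an extra line. You assert that $\mathcal{T}_4$ is the standard representation of $S_3$, but you only justify that it is two-dimensional. A two-dimensional $S_3$-module could be a sum of one-dimensional constituents, in which case the fixed lines of the three transpositions would not span and $E_{ij,k},E_{ik,j},E_{jk,i}$ could fail to span $\mathcal{T}_4$, breaking the elimination. The fix is the plethysm $S^3(\mathfrak{sl}_2(\mathbb{C}))\cong V_7\oplus V_3$ and $\Lambda^3(\mathfrak{sl}_2(\mathbb{C}))\cong V_1$ (neither contains the five-dimensional irreducible), which rules out trivial and sign constituents in $\mathcal{T}_4$ and forces it to be the standard representation. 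Alternatively, one could verify directly that two of the three elements $E_{ij,k},E_{ik,j},E_{jk,i}$ are linearly independent by a further specialization. With that addition the argument is complete.
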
 

\begin{remark} \label{remark:d<=3} 
For $d=1$ obviously $\mathcal{C}^{(1)}=\mathcal{D}_0$. 
The case $d=2$ is settled in \cite{Procesi:2}, where it is shown that $\mathcal{C}^{(2)}$ (which is no longer commutative) is minimally generated by $t_1^{(2)},\dots,t_m^{(2)}$. 
The case $d=3$ is discussed in \cite{DD} (although the result is not stated there in the above form). 
\end{remark} 

Identifying $\mathcal{D}_0$ with the center $\mathcal{D}_0I_d$ of $\mathcal{C}^{(d)}$, we can view $\mathcal{C}^{(d)}$ as a 
$\mathcal{D}_0$-module.  Proposition~\ref{prop:D_0generators} and Theorem~\ref{thm:algebra generators} 
imply that the products $t_{i_1}(d)\cdots t_{i_k}(d)$ generate $\mathcal{C}^{(d)}$ as a $\mathcal{D}_0$-module. 
This statement will be refined as follows. The general linear group $\mathrm{GL}_m(\mathbb{C})$ acts from the right on 
$\mathfrak{sl}_2(\mathbb{C})^{\oplus m}$: for $X=(X_1,\dots,X_m)\in \mathfrak{sl}_2(\mathbb{C})^{\oplus m}$ and 
$g=(g_{ij})_{i,j=1}^m$ we have 
\begin{equation}\label{eq:right GL-action}
X\cdot g=(\sum_{i=1}^mg_{i1}X_i,\dots,\sum_{i=1}^mg_{im}X_i).\end{equation}
This action commutes with the action of $\mathrm{SL}_2(\mathbb{C})$ on $\mathfrak{sl}_2(\mathbb{C})^{\oplus m}$, 
therefore we get an induced $\mathrm{GL}_m(\mathbb{C})$-action on $\mathcal{C}^{(d)}$ via $\mathbb{C}$-algebra automorphisms: for $F\in \mathcal{C}^{(d)}$, $X\in  \mathfrak{sl}_2(\mathbb{C})^{\oplus m}$, and $g\in \mathrm{GL}_m(\mathbb{C})$ we have 
\[gF: \mathfrak{sl}_2(\mathbb{C})^{\oplus m}\to \mathbb{C}^{d\times d},\quad X\mapsto F(X\cdot g).\] 
The standard grading on the algebra of polynomial functions on $ \mathfrak{sl}_2(\mathbb{C})^{\oplus m}$ 
induces a grading on $\mathcal{C}^{(d)}$ and on $\mathcal{D}_0$. The elements $t_i(d)$ are homogeneous of degree $1$, 
the elements $u_{ij}^{(d)}$ are homogeneous of degree $2$, and the elements $w_{ijk}^{(d)}$ are homogeneous of degree $3$. Denote by $\mathcal{D}_0^+$ the maximal ideal of $\mathcal{D}$ spanned by its homogeneous elements of positive degree. Then a set of homogeneous elements of $\mathcal{C}^{(d)}$ minimally generates the $\mathcal{D}_0$-module $\mathcal{C}^{(d)}$ if and only if they constitute a $\mathbb{C}$-vector space basis in a direct complement of the subspace $\mathcal{D}_0^+\mathcal{C}^{(d)}$ in $\mathcal{C}^{(d)}$. 

Recall that the isomorphism classes of the polynomial $\mathrm{GL}_m(\mathbb{C})$-modules are labeled by their \emph{highest weights}, which are identified with partitions $\lambda=(\lambda_1,\dots,\lambda_n)$, where 
$\lambda_1\ge\cdots\ge \lambda_m$ are nonnegative integers. 
For an element $F$ in a polynomial $\mathrm{GL}_m(\mathbb{C})$-module (e.g. for $F\in \mathcal{C}^{(d)}$) 
we write $\langle F\rangle_{\mathrm{GL}_m(\mathbb{C})}$ for the $\mathrm{GL}_m(\mathbb{C})$-module generated by $F$. We shall use the notation $[t_1^{(d)},t_2^{(d)}]:=t_1^{(d)}t_2^{(d)}-t_2^{(d)}t_1^{(d)}$.

\begin{theorem}\label{thm:D_0-module generators} 
Fix $d\ge 2$. 
In order to simplify notation, write $t_i:=t_i^{(d)}$. 
We have 
 \[\mathcal{C}^{(d)}=\mathcal{D}_0^+\mathcal{C}^{(d)}\oplus \bigoplus_{s=0}^{d-1}\langle t_1^s\rangle_{\mathrm{GL}_m(\mathbb{C})}\oplus\bigoplus_{s=1}^{d-1}\langle t_1^{s-1}[t_1,t_2]\rangle_{\mathrm{GL}_m(\mathbb{C})}.\] 
 In particular, $\mathcal{C}^{(d)}$ is minimally generated as a $\mathcal{D}_0$-module by the subspace 
 \[\bigoplus_{s=0}^{d-1}\langle t_1^s\rangle_{\mathrm{GL}_m(\mathbb{C})}\oplus\bigoplus_{s=1}^{d-1}\langle t_1^{s-1}[t_1,t_2]\rangle_{\mathrm{GL}_m(\mathbb{C})},\]
 where $\langle t_1^s\rangle_{\mathrm{GL}_m(\mathbb{C})}$ is an irreducible $\mathrm{GL}_m(\mathbb{C})$-submodule of $\mathcal{C}^{(d)}$ with highest weight $(s,0,\dots,0)$, 
and $\langle t_1^{s-1}[t_1,t_2]\rangle_{\mathrm{GL}_m(\mathbb{C})}$ is an irreducible $\mathrm{GL}_m(\mathbb{C})$-submodule of $\mathcal{C}^{(d)}$ with highest weight $(s,1,0,\dots,0)$.   
 \end{theorem}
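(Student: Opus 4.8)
The plan is to combine the algebra generation statement of Theorem~\ref{thm:algebra generators} with classical $\mathrm{GL}_m(\mathbb{C})$-representation theory, reducing everything to understanding which irreducible $\mathrm{GL}_m(\mathbb{C})$-modules appear in a suitable quotient of $\mathcal{C}^{(d)}$. Concretely, the graded vector space $\overline{\mathcal{C}}:=\mathcal{C}^{(d)}/\mathcal{D}_0^+\mathcal{C}^{(d)}$ is a $\mathrm{GL}_m(\mathbb{C})$-module whose homogeneous components are finite dimensional, and a set of homogeneous elements minimally generates $\mathcal{C}^{(d)}$ over $\mathcal{D}_0$ exactly when their images form a basis of $\overline{\mathcal{C}}$. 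First I would observe that, by Theorem~\ref{thm:algebra generators} (and Remark~\ref{remark:d<=3} for $d=2$), the algebra $\mathcal{C}^{(d)}$ is generated by $t_1,\dots,t_m$ together with the central elements $u_{ij}^{(d)}I_d$; since the latter lie in $\mathcal{D}_0^+\mathcal{C}^{(d)}$ whenever they occur as factors, $\overline{\mathcal{C}}$ is spanned by the images of monomials $t_{i_1}\cdots t_{i_k}$ in the noncommuting variables $t_i$. So the task is to pin down exactly which $\mathrm{GL}_m(\mathbb{C})$-isotypic pieces of the span of these monomials survive modulo $\mathcal{D}_0^+\mathcal{C}^{(d)}$.

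The second step is a weight/highest-weight analysis. The space spanned by degree-$k$ monomials in $t_1,\dots,t_m$ is, as a $\mathrm{GL}_m(\mathbb{C})$-module, a quotient of the $k$-th tensor power of the standard module $\mathbb{C}^m$ (the variables $t_i$ transform as a basis of $\mathbb{C}^m$), so every irreducible constituent has a partition $\lambda\vdash k$ with at most $m$ parts. I would argue that only partitions with at most two parts can occur in $\overline{\mathcal{C}}$: a highest weight vector for a partition $\lambda=(\lambda_1,\lambda_2,\lambda_3,\dots)$ with $\lambda_3>0$ can be built using an antisymmetrization over three indices, and the relevant antisymmetric combinations $\sum_{\sigma\in S_3}\mathrm{sgn}(\sigma)\,t_{\sigma(1)}t_{\sigma(2)}t_{\sigma(3)}$ are trace-related identities that lie in $\mathcal{D}_0^+\mathcal{C}^{(d)}$ — this is where the "obvious" relations among concomitants enter, and I would want to invoke the description of $\mathcal{D}_0$ via the $w_{ijk}^{(d)}$ (Proposition~\ref{prop:D_0generators}) together with the Cayley–Hamilton-type identities for the matrices $\rho^{(d)}(X_i)$. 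That leaves two-row partitions $\lambda=(s-r,r,0,\dots,0)$, and the representatives $t_1^s$ (highest weight $(s,0,\dots,0)$) and $t_1^{s-1}[t_1,t_2]$ (highest weight $(s,1,0,\dots,0)$) are the natural highest-weight vectors for the one-row and "hook with second part $1$" cases. The remaining point is to show that the range $0\le s\le d-1$ is exactly right: for $s\ge d$ one uses that $\rho^{(d)}(h)$, and hence generically $\rho^{(d)}(X_1)$, satisfies its degree-$d$ minimal polynomial, so $t_1^d$ (and therefore $t_1^d$ times anything, and similarly $t_1^{d-1}[t_1,t_2]\cdot t_1$) is congruent modulo $\mathcal{D}_0^+\mathcal{C}^{(d)}$ to a combination of lower powers — giving the upper cutoff — while for $s\le d-1$ these elements are genuinely nonzero in $\overline{\mathcal{C}}$, which should follow from a dimension count or from specializing to $h$ as in the proof of Proposition~\ref{prop:D_0generators}. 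The Lemmas \ref{lemma:t_1^s} and \ref{lemma:w_{ijk}} referenced in the excerpt are presumably exactly these two computations.

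The third step is to assemble the direct sum decomposition and verify minimality. Once one knows that $\overline{\mathcal{C}}$ contains each $\langle t_1^s\rangle_{\mathrm{GL}_m(\mathbb{C})}$ ($0\le s\le d-1$) and each $\langle t_1^{s-1}[t_1,t_2]\rangle_{\mathrm{GL}_m(\mathbb{C})}$ ($1\le s\le d-1$) as a nonzero irreducible submodule, with pairwise distinct highest weights $(s)$ and $(s,1)$ — note these are all distinct across the two families and across different $s$ — Schur's lemma forces the sum inside $\overline{\mathcal{C}}$ to be direct. To get equality $\overline{\mathcal{C}}=\bigoplus_s\langle t_1^s\rangle\oplus\bigoplus_s\langle t_1^{s-1}[t_1,t_2]\rangle$ rather than mere containment, I would lift back: the displayed right-hand side together with $\mathcal{D}_0^+\mathcal{C}^{(d)}$ must be shown to span all of $\mathcal{C}^{(d)}$, which is where the step-two multiplicity restrictions (only one- and two-row partitions occur, and with the stated multiplicity at most one each) are used in their full strength, together with the reduction $t_1^d\equiv\cdots$ to control unbounded degree. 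Minimality as a $\mathcal{D}_0$-module is then automatic from the equivalence stated just before the theorem: a homogeneous spanning set is minimal iff it is a basis of a complement of $\mathcal{D}_0^+\mathcal{C}^{(d)}$, which the direct-sum decomposition exhibits. The main obstacle I anticipate is step two — rigorously showing that no partition with three or more rows survives and that the multiplicity of each surviving two-row partition is exactly one; this is the heart of the matter and I expect it rests on carefully translating the Cayley–Hamilton/trace identities for $2$-row-closed tensor expressions into statements about membership in $\mathcal{D}_0^+\mathcal{C}^{(d)}$, quite possibly by passing through the $d=2$ case (Procesi) where the structure of the trace algebra of two generic $2\times 2$ matrices is completely explicit, and then bootstrapping to general $d$ via the representation $\rho^{(d)}$ and the fact (from Proposition~\ref{prop:D_0generators}) that the bilinear and trilinear invariants are unique up to scalars.
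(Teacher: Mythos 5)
There is a genuine gap, and it sits exactly where you yourself locate ``the main obstacle.'' Two problems. First, your opening move is circular in the context of this paper: you invoke Theorem~\ref{thm:algebra generators} to conclude that $\overline{\mathcal{C}}=\mathcal{C}^{(d)}/\mathcal{D}_0^+\mathcal{C}^{(d)}$ is spanned by images of monomials in the $t_i$, but the paper's only proof of Theorem~\ref{thm:algebra generators} is \emph{deduced from} Theorem~\ref{thm:D_0-module generators} (together with Proposition~\ref{prop:D_0generators} and Lemma~\ref{lemma:w_{ijk}}). The statement that products of the $t_i$ generate $\mathcal{C}^{(d)}$ over $\mathcal{D}_0$ is a first-fundamental-theorem-type assertion for the representation $\rho^{(d)}$; it is known for $d=2$ from Procesi but is not automatic for $d\ge 3$, so you cannot take it as an input without an independent argument. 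Second, the core of the theorem --- that the quotient $\overline{\mathcal{C}}$ consists of exactly one copy of the irreducible with highest weight $(s,0,\dots,0)$ for $0\le s\le d-1$ and one copy of $(s,1,0,\dots,0)$ for $1\le s\le d-1$, and nothing else --- is only sketched. Your antisymmetrization remark (essentially Lemma~\ref{lemma:w_{ijk}}) at best handles three-row partitions; it says nothing about why two-row partitions such as $(s,2)$ or $(s-r,r)$ with $r\ge 2$ do not survive, nor why the surviving multiplicities are exactly one, nor why $t_1^s$ itself does not accidentally fall into $\mathcal{D}_0^+\mathcal{C}^{(d)}$ (nonvanishing of $t_1^s$ as a map, e.g.\ by specializing at $h$, does not rule out membership in $\mathcal{D}_0^+\mathcal{C}^{(d)}$). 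The Cayley--Hamilton cutoff at $s=d$ is likewise only asserted.

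The paper's actual mechanism is entirely different and you do not identify it: one decomposes $\mathbb{C}^{d\times d}$ by Clebsch--Gordan into $V_1\oplus V_3\oplus\cdots\oplus V_{2d-1}$, giving $\mathcal{C}^{(d)}=\bigoplus_{s=0}^{d-1}\mathcal{C}^{(d)}_{2s}$ with each $\mathcal{C}^{(d)}_{2s}$ isomorphic, as a graded $\mathcal{D}_0$- and $\mathrm{GL}_m(\mathbb{C})$-module, to the order-$2s$ part $\mathcal{D}_{2s}$ of the classical algebra of joint concomitants of $m$ binary quadratic forms. The Grace--Young generators and syzygies for $\mathcal{D}$ (Theorem~\ref{thm:grace and young}, Corollary~\ref{cor:odd q}) then give $\mathcal{D}_{2s}/\mathcal{D}_0^+\mathcal{D}_{2s}\cong\langle F_1^s\rangle\oplus\langle F_1^{s-1}J_{12}\rangle$ with the stated highest weights, by an explicit dimension count against semistandard tableaux; Lemma~\ref{lemma:t_1^s} is then used only to match $t_1^s$ and $t_1^{s-1}[t_1,t_2]$ with these classical generators by locating them in the filtration $\bigoplus_{j\le s}\mathcal{C}^{(d)}_{2j}$. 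Without this transfer to the commutative algebra $\mathcal{D}$ (or some substitute for it), your step two remains an unproved assertion, so the proposal does not constitute a proof.
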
 

\begin{remark} (i) Theorem~\ref{thm:D_0-module generators}  in the special case $d=2$ follows from \cite{Procesi:2}, and in the special case $d=3$ it follows from \cite[Theorem 5.6]{DD}. 

(ii) The parallel preprint \cite{D} gives some information on the $\mathrm{GL}_m(\mathbb{C})$-module structure of 
$\mathcal{C}^{(d)}$. Namely, it is shown there that the multiplicity of the irreducible $\mathrm{GL}_m(\mathbb{C})$-module with highest weight $\lambda=(\lambda_1,\dots,\lambda_m)$ as a summand in $\mathcal{C}^{(d)}$ is non-zero only if $\lambda=(\lambda_1,\lambda_2,\lambda_3)$ (i.e. $\lambda$ has at most $3$ non-zero parts), and no multiplicity is greater than $3^{d-2}$ for $d\ge 2$. 
\end{remark} 

\section{Classical invariant theory}\label{sec:classical} 

In order to describe $\mathcal{C}^{(d)}$ we invoke known results on another algebra of concomitants, namely 
on 
\[\mathcal{D}=\mathcal{C}_{\mathrm{SL}_2(\mathbb{C})}(\mathfrak{sl}_2(\mathbb{C})^{\oplus m},
\mathbb{C}[x,y]),\] 
where $\mathrm{SL}_2(\mathbb{C})$ acts on the $2$-variable polynomial algebra $\mathbb{C}[x,y]$ as follows: 
for $g=\left(\begin{array}{cc}g_{11} & g_{12} \\g_{21} & g_{22}\end{array}\right)\in \mathrm{SL}_2(\mathbb{C})$ and 
$p\in\mathbb{C}[x,y]$ we set 
\[g\cdot p=p(g_{11}x+g_{21}y,g_{12}x+g_{22}y).\] 
For $q=0,1,2,\dots$ denote by $\mathbb{C}[x,y]_q$ the degree $q$ homogeneous component of $\mathbb{C}[x,y]$; 
this $(q+1)$-dimensional subspace is $\mathrm{SL}_2(\mathbb{C})$-invariant, on which $\mathrm{SL}_2(\mathbb{C})$ 
acts via the representation $\Psi^{(q+1)}$. The map 
\begin{equation}\label{eq:sl2=binary quadratic}\mathfrak{sl}_2(\mathbb{C})\to \mathbb{C}[x,y]_2,\quad 
\left(\begin{array}{cc}b & -a\\ c & -b\end{array}\right)\mapsto ax^2+2bxy+cy^2\end{equation}  
is an $\mathrm{SL}_2(\mathbb{C})$-module isomorphism. 
Using this isomorphism we make the identification 
\[\mathcal{D}=\mathcal{C}_{\mathrm{SL}_2(\mathbb{C})}(\mathbb{C}[x,y]_2^{\oplus m},\mathbb{C}[x,y]).\] 

\begin{remark}\label{remark:D} 
The algebra $\mathcal{D}$ appears in classical invariant theory under the name of \emph{the algebra of joint concomitants of several binary quadratic forms}. The commutativity of $\mathbb{C}[x,y]$ implies that $\mathcal{D}$ is commutative, unlike  the algebras $\mathcal{C}^{(d)}$. 
\end{remark} 
 
 There is a natural bigrading on $\mathcal{D}$; namely, denote by $\mathcal{D}_{q,p}$ the space of degree 
 $p$ homogeneous $\mathrm{SL}_2(\mathbb{C})$-equivariant polynomial maps from 
 $\mathcal{C}[x,y]_2^{\oplus m}$ to $\mathbb{C}[x,y]_q$. 
 Then we have 
 \[\mathcal{D}=\bigoplus_{q=0}^\infty \mathcal{D}_q,\ \text{ where }\ 
 \mathcal{D}_q=\bigoplus_{p=0}^{\infty} \mathcal{D}_{q,p}.\] 
 For a non-zero concomitant $F\in\mathcal{D}_{q,p}$, the number $q$ is called the \emph{order} of $F$, and $p$ is called 
 the \emph{degree} of $F$. The subspace $\mathcal{D}_0$ is a subalgebra of $\mathcal{D}$, and because of the identification \eqref{eq:sl2=binary quadratic}, it is identified with the algebra  
 $\mathcal{D}_0=\mathbb{C}[\mathfrak{sl}_2(\mathbb{C})^{\oplus m}]^{\mathrm{SL}_2(\mathbb{C})}$ 
introduced  in \eqref{eq:D_0} in Section~\ref{sec:main results}. 
Moreover, $\mathcal{D}_q$ is a $\mathcal{D}_0$-submodule of $\mathcal{D}$ for all $q$.   
 
 Let us introduce some elements of $\mathcal{D}$. 
 For $i=1,\dots,m$ set 
 \[F_i:\mathbb{C}[x,y]_2^{\oplus m}\to \mathbb{C}[x,y]_2,\quad (f_1,\dots,f_m)\mapsto f_i.\] 
 The \emph{discriminant} of a binary quadratic form $f=ax^2+2bxy+cy^2$ is 
 $\mathrm{Disc}(f)=4(b^2-ac)$.  For $1\le i,j\le m$ set 
 \[D_{ij}:\mathbb{C}[x,y]_2^{\oplus m}\to \mathbb{C},\quad (f_1,\dots,f_m)\mapsto 
 \frac 18(\mathrm{Disc}(f_i+f_j)-\mathrm{Disc}(f_i)-\mathrm{Disc}(f_j)).\] 
 In particular, $D_{ii}(f_1,\dots,f_m)=\frac 14\mathrm{Disc}(f_i)$. 
 For a triple of binary forms, $f_i=a_ix^2+2b_ixy+c_iy^2$, $i=1,2,3$, set 
 \[E(f_1,f_2,f_3):=\det\left(\begin{array}{ccc}a_1 & b_1 & c_1 \\a_2 & b_2 & c_2 \\a_3 & b_3 & c_3\end{array}\right).\] 
 For $1\le i,j,k\le m$ set 
 \[E_{ijk}:\mathbb{C}[x,y]_2^{\oplus m}\to \mathbb{C},\quad (f_1,\dots,f_m)\mapsto 
 E(f_i,f_j,f_k).\] 
 The \emph{Jacobian} of the binary quadratic forms $f_1$ and $f_2$ is the binary quadratic form 
 \[\frac 14\mathrm{Jac}(f_1,f_2)=
 (a_1b_2-b_1a_2)x^2+(a_1c_2-c_1a_2)xy+(b_1c_2-c_1b_2)y^2.\] 
 For $1\le i,j\le m$ set 
 \[J_{ij}:\mathbb{C}[x,y]_2^{\oplus m}\to \mathbb{C}[x,y]_2, \quad (f_1,\dots,f_m)\mapsto \frac 14\mathrm{Jac}(f_i,f_j).\] 
  
 In the following theorem we collect the known facts on the generators and relations of the algebra $\mathcal{D}$ that we shall need later. 
 
 \begin{theorem} \label{thm:grace and young} 
 \begin{itemize} 
 \item[(i)] The $\mathbb{C}$-algebra $\mathcal{D}$ is generated by the elements $F_i$ $(i=1,\dots,m)$, 
 $D_{ij}$ $(1\le i\le j\le m)$, $E_{ijk}$ $(1\le i<j<k\le m)$, and 
 $J_{ij}$ $(1\le i<j\le m)$. 
 \item[(ii)] We have the equality 
 \[J_{12}J_{34}=-D_{13}F_2F_4-D_{24}F_1F_3+D_{14}F_2F_3+D_{23}F_1F_4.\] 
\item[(iii)] We have the equality 
\[F_1J_{23}-F_2J_{13}+F_3J_{12}=0.\] 
\end{itemize}
\end{theorem}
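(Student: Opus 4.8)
\emph{Approach.} The three items have different character. Part (i) is a result of classical invariant theory — it exhibits a finite complete system of joint concomitants of $m$ binary quadratic forms (cf.\ Remark~\ref{remark:D}) — and I would take it from the classical literature (Gordan; Grace and Young), recalling the mechanism (the symbolic method, or equivalently the first fundamental theorem for the orthogonal group in dimension three) to the extent needed. Parts (ii) and (iii) are explicit identities in $\mathbb{C}[x,y]$, which I would dispatch by a short $\mathrm{SL}_2(\mathbb{C})$-equivariant bookkeeping argument.

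For (i) the backbone is the symbolic method combined with the smallness forced by the forms being \emph{quadratic}. By the first fundamental theorem for $\mathrm{SL}_2(\mathbb{C})$, every concomitant of $f_1,\dots,f_m$ is a $\mathbb{C}$-linear combination of symbolic bracket monomials in letters $\alpha^{(i)}$ representing $f_i$; since $\deg f_i=2$, each letter occurs with total degree exactly $2$, hence in at most two bracket- or linear-factors. Reading such a monomial as a graph (letters $=$ vertices of degree $2$, brackets $(\alpha\beta)=$ edges, factors $\alpha_x=$ half-edges to $x$), its components are pure-bracket cycles and $x$-to-$x$ paths, and the syzygies
\[(\alpha\beta)\gamma_x+(\beta\gamma)\alpha_x+(\gamma\alpha)\beta_x=0,\qquad (\alpha\beta)(\gamma\delta)+(\alpha\gamma)(\delta\beta)+(\alpha\delta)(\beta\gamma)=0\]
— together with the classical Cayley--Sylvester count of the dimensions of the homogeneous (multidegree, order) components, which closes the loops the syzygies alone leave open — reduce every component to one of $\alpha_x^2$ (giving $F_i$), $(\alpha\beta)^2$ (giving $D_{ij}$, with $D_{ii}=\frac14\mathrm{Disc}(f_i)$), $(\alpha\beta)\alpha_x\beta_x$ (giving $J_{ij}$), or the triangle $(\alpha\beta)(\beta\gamma)(\gamma\alpha)$ (giving $E_{ijk}$). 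Equivalently, via the isomorphism \eqref{eq:sl2=binary quadratic} one lets $\mathrm{SL}_2(\mathbb{C})$ act through $\mathrm{SO}_3$ on $\mathbb{C}[x,y]_2\cong\mathfrak{sl}_2(\mathbb{C})\cong\mathbb{C}^3$; then $F_i$ is the vector itself, $D_{ij}$ a multiple of the invariant symmetric bilinear form, $E_{ijk}$ the $3\times 3$ determinant and $J_{ij}$ the cross product (the Lie bracket of $\mathfrak{sl}_2(\mathbb{C})$), so that (i) becomes the first fundamental theorem for vector concomitants of the special orthogonal group in dimension $3$, which I would cite.

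For (ii) and (iii) I would use Schur's lemma. Both sides of (ii) are $\mathrm{SL}_2(\mathbb{C})$-equivariant polynomial maps $\mathbb{C}[x,y]_2^{\oplus m}\to\mathbb{C}[x,y]_4$ of multidegree $(1,1,1,1)$ in the first four arguments (and degree $0$ in the rest) that are alternating in $\{f_1,f_2\}$ and in $\{f_3,f_4\}$; such maps form the space $\mathrm{Hom}_{\mathrm{SL}_2(\mathbb{C})}\!\bigl(\textstyle\bigwedge^2\mathbb{C}[x,y]_2\otimes\bigwedge^2\mathbb{C}[x,y]_2,\ \mathbb{C}[x,y]_4\bigr)$, which is $1$-dimensional because $\bigwedge^2\mathbb{C}[x,y]_2\cong\mathbb{C}[x,y]_2$ and $\mathbb{C}[x,y]_2\otimes\mathbb{C}[x,y]_2\cong\mathbb{C}[x,y]_4\oplus\mathbb{C}[x,y]_2\oplus\mathbb{C}$ by Clebsch--Gordan. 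Hence the two sides are proportional, and the proportionality constant is fixed by evaluating at one convenient argument. For (iii), $F_1J_{23}-F_2J_{13}+F_3J_{12}$ is an equivariant map $\mathbb{C}[x,y]_2^{\oplus m}\to\mathbb{C}[x,y]_4$ of multidegree $(1,1,1)$ in the first three arguments that is alternating in $\{f_1,f_2,f_3\}$, hence lies in $\mathrm{Hom}_{\mathrm{SL}_2(\mathbb{C})}\!\bigl(\textstyle\bigwedge^3\mathbb{C}[x,y]_2,\ \mathbb{C}[x,y]_4\bigr)$; but $\bigwedge^3\mathbb{C}[x,y]_2$ is the trivial module (it is the top exterior power of a module on which $\mathrm{SL}_2(\mathbb{C})$ acts with determinant $1$) while $\mathbb{C}[x,y]_4$ is a nontrivial irreducible, so this Hom-space is $0$ and the combination vanishes identically. (Both identities can alternatively be verified by a direct expansion in the coefficients $a_i,b_i,c_i$.)

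The main obstacle is part (i). Items (ii) and (iii) are short once one works $\mathrm{SL}_2(\mathbb{C})$-equivariantly, but the substance of (i) is \emph{completeness}: that no joint concomitant of order exceeding $4$, and no invariant beyond those assembled from $D_{ij}$ and $E_{ijk}$, is needed. Establishing this requires either carrying out the symbolic reduction above with care — keeping track of exactly which bracket graphs are irreducible — or a precise pointer into the classical classification of the complete system of several binary quadratic forms.
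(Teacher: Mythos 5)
Your proposal is essentially correct, and for part (i) it coincides with the paper's route: the paper simply cites the classical source (Grace--Young, \emph{The Algebra of Invariants}, p.~162) for the complete system of joint concomitants of several binary quadratics, exactly as you propose to do, and your symbolic-method/$\mathrm{SO}_3$ sketch is a reasonable gloss on why that classical result holds but is not carried out in the paper either. Where you genuinely diverge is in (ii) and (iii): the paper also takes (ii) from Grace--Young (up to normalization of the generators) and merely asserts that (iii) ``can easily be verified,'' whereas you give self-contained representation-theoretic proofs. Your argument for (iii) is clean and complete: the expression is a trilinear alternating equivariant map $V^{\otimes 3}\to\mathbb{C}[x,y]_4$ with $V=\mathbb{C}[x,y]_2$, hence factors through $\bigwedge^3 V\cong\mathbb{C}$ (trivial, since $\mathrm{SL}_2(\mathbb{C})$ acts on $V$ through $\mathrm{SO}_3$), and $\mathrm{Hom}_{\mathrm{SL}_2(\mathbb{C})}(\mathbb{C},\mathbb{C}[x,y]_4)=0$. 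Your argument for (ii) correctly reduces both sides to a one-dimensional space $\mathrm{Hom}_{\mathrm{SL}_2(\mathbb{C})}(\bigwedge^2V\otimes\bigwedge^2V,\mathbb{C}[x,y]_4)$ via $\bigwedge^2V\cong V$ and Clebsch--Gordan, but note that to actually conclude the stated equality (and not just proportionality) you must carry out the one evaluation you defer — e.g.\ at $f_1=x^2$, $f_2=xy$, $f_3=x^2$, $f_4=y^2$ — checking in particular that the chosen specialization does not annihilate both sides. This is a small computation, not a gap in the method; with it supplied, your treatment of (ii) and (iii) is arguably more informative than the paper's citation, at the cost of relying on the literature only for the substantive completeness statement (i), which both you and the paper do.
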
  
 \begin{proof} For (i) and (ii) see for example \cite[Page 162]{Grace-Young} 
 (a concomitant denoted by the same letter in loc. cit. as here in some cases differs by a non-zero scalar factor from ours). 
 It is claimed on page 164, Section 139  of loc. cit. that "every kind of syzygy which occurs in the irreducible system of concomitants for any number of quadratics has now been stated". The relation (iii) is not stated there, but one can easily verify that it holds as well. 
 \end{proof} 
 
 The right action of $\mathrm{GL}_m(\mathbb{C})$ on $\mathbb{C}[x,y]_2^{\oplus m}$ given by \eqref{eq:right GL-action} 
(recall that we have identified $\mathbb{C}[x,y]_2^{\oplus m}$ with $\mathbb{C}[\mathfrak{sl}_2(\mathbb{C})^{\oplus m}]$) induces an action on $\mathcal{D}$ via $\mathbb{C}$-algebra automorphisms: for 
 $F\in \mathcal{D}$, $X\in \mathbb{C}[x,y]_2^{\oplus m}$, and $g\in \mathrm{GL}_m(\mathbb{C})$ we have 
 $(gF)(X)=F(X\cdot g)$. 
 
 \begin{corollary}\label{cor:odd q} 
 \begin{itemize} 
 \item[(i)] For odd $q$ we have $\mathcal{D}_q=\{0\}$. 
 \item[(ii)]  For $q=2s>0$ even  we have  
 \[\mathcal{D}_{q,p}=\begin{cases} \{0\} &\text{ for }p<s; \\
 \mathrm{Span}_{\mathbb{C}}\{F_{i_1}\cdots F_{i_s}\mid 1\le i_1,\dots,i_s\le m\}&\text{ for }p=s;\\
 \mathrm{Span}_{\mathbb{C}}\{F_{i_1}\cdots F_{i_{s-1}}J_{kl}\mid 1\le i_1,\dots,i_{s-1}\le m, 1\le k<l\le m\}&\text{ for }p=s+1.
 \end{cases}\] 
\item[(iii)] For $q=2s>0$ even we have 
 \[\mathcal{D}_q=\mathcal{D}_0^+\mathcal{D}_q\oplus \mathcal{D}_{q,s}
 \oplus \mathcal{D}_{q,s+1}.\]  
 In particular, $\mathcal{D}_q$ is minimally generated as a $\mathcal{D}_0$-module by its subspace 
 $\mathcal{D}_{q,s}
 \oplus \mathcal{D}_{q,s+1}$. 
 \item[(iv)] For $q=2s$ even, $\mathcal{D}_{q,s}=\langle F_1^s\rangle_{\mathrm{GL}_m(\mathbb{C})}$ and 
 $\mathcal{D}_{q,s+1}=\langle F_1^{s-1}J_{12}\rangle_{\mathrm{GL}_m(\mathbb{C})}$ are irreducible 
 $\mathrm{GL}_m(\mathbb{C})$-submodules of $\mathcal{D}$ of highest weight $(s,0,\dots,0)$ and 
 $(s,1,0,\dots,0)$, respectively. 
 \end{itemize}
 \end{corollary}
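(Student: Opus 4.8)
The plan is to derive all four parts of Corollary~\ref{cor:odd q} from Theorem~\ref{thm:grace and young}, by bookkeeping in the bigrading of $\mathcal{D}$ together with a small amount of representation theory of $\mathrm{GL}_m(\mathbb{C})$. First I would record the bidegrees of the generators listed in Theorem~\ref{thm:grace and young}(i): $F_i$ has order $2$ and degree $1$; $J_{ij}$ has order $2$ and degree $2$; $D_{ij}$ has order $0$ and degree $2$; $E_{ijk}$ has order $0$ and degree $3$. Since multiplication in $\mathcal{D}$ is pointwise multiplication in $\mathbb{C}[x,y]$, orders and degrees are additive under products, and by Theorem~\ref{thm:grace and young}(i) it suffices to analyze monomials in these generators. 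A monomial with $n_1$ order-$2$ factors and $n_0$ order-$0$ factors has order $2n_1$; this gives (i) at once (odd order never occurs) and shows that an order-$2s$ monomial has exactly $s$ order-$2$ factors. Its degree is then $s+j+2a+3b$, where $j,a,b$ count its $J$-, $D$-, and $E$-factors; the minimum over admissible monomials is $s$, attained exactly when all order-$2$ factors are $F_i$'s, and the value $s+1$ forces $j=1$, $a=b=0$. This proves (ii).

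For (iii), directness is clear: $\mathcal{D}_{q,s}$ and $\mathcal{D}_{q,s+1}$ sit in distinct degrees, and $\mathcal{D}_0^+$ is concentrated in degrees $\ge 2$ (by Proposition~\ref{prop:D_0generators}, $\mathcal{D}_0$ is generated by the $D_{ij}$ and $E_{ijk}$), so $\mathcal{D}_0^+\mathcal{D}_q$ lies in degrees $\ge s+2$. For spanning I must check $\mathcal{D}_{q,p}\subseteq\mathcal{D}_0^+\mathcal{D}_q$ when $p\ge s+2$: a monomial of order $2s$ and degree $\ge s+2$ that contains a $D$- or $E$-factor already lies in $\mathcal{D}_0^+\mathcal{D}_q$; otherwise it is a product of $s$ factors from $\{F_i,J_{kl}\}$ whose degree $s+(\#\,J\text{'s})$ forces at least two $J$-factors, and then I substitute the identity of Theorem~\ref{thm:grace and young}(ii) — valid for an arbitrary quadruple of forms, hence applicable to any product $J_{ab}J_{cd}$ — to rewrite the monomial as a $\mathcal{D}_0^+$-combination of order-$2s$ products of $F$'s and $J$'s. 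The minimal-generation assertion then follows by the standard graded Nakayama lemma.

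For (iv): by (ii), $\mathcal{D}_{q,s}$ is spanned by the products $F_{i_1}\cdots F_{i_s}$, which are linearly independent (specializing $f_i=\lambda_i f$ reduces this to independence of monomials in the $\lambda_i$), so $\mathcal{D}_{q,s}$ is the degree-$s$ component of the polynomial subalgebra $\mathbb{C}[F_1,\dots,F_m]$; since the $F_i$ span a copy of the standard $\mathrm{GL}_m(\mathbb{C})$-module, this is $\mathrm{Sym}^s$ of the standard module, irreducible of highest weight $(s,0,\dots,0)$ and generated by $F_1^s$. For $\mathcal{D}_{q,s+1}$, the description in (ii) together with $J_{kl}=-J_{lk}$ gives a $\mathrm{GL}_m(\mathbb{C})$-equivariant surjection $\mathrm{Sym}^{s-1}(\mathbb{C}^m)\otimes\Lambda^2(\mathbb{C}^m)\twoheadrightarrow\mathcal{D}_{q,s+1}$, and by Pieri's rule the source decomposes as $V_{(s,1,0,\dots)}\oplus V_{(s-1,1,1,0,\dots)}$, each summand of multiplicity one (write $V_\lambda$ for the irreducible polynomial $\mathrm{GL}_m(\mathbb{C})$-module of highest weight $\lambda$; the second summand is absent when $s=1$ or $m\le 2$). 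It remains to show that exactly the first summand survives. The element $F_1^{s-1}J_{12}$ is a highest weight vector of weight $(s,1,0,\dots,0)$ — a direct computation of the Borel action — and is nonzero because $\mathbb{C}[x,y]$ is a domain, so $V_{(s,1,0,\dots)}$ maps isomorphically onto a submodule of $\mathcal{D}_{q,s+1}$; on the other hand $F_1^{s-2}(F_1J_{23}-F_2J_{13}+F_3J_{12})$ is a highest weight vector of weight $(s-1,1,1,0,\dots,0)$ in the source that maps to $0$ by Theorem~\ref{thm:grace and young}(iii), so by multiplicity one the entire second summand lies in the kernel. Hence $\mathcal{D}_{q,s+1}\cong V_{(s,1,0,\dots)}=\langle F_1^{s-1}J_{12}\rangle_{\mathrm{GL}_m(\mathbb{C})}$, completing (iv).

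I expect the last step of (iv) — the verification that the extra Pieri component $V_{(s-1,1,1,0,\dots)}$ is annihilated — to be the main obstacle, since it is the one place where relation (iii) of Theorem~\ref{thm:grace and young} enters in an essential rather than purely formal way; by contrast, (i), (ii), and the reduction carried out in (iii) are routine once the bidegrees of the generators are in hand.
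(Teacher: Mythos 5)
Your proof is correct, and for parts (i)--(iii) it follows the paper's own argument essentially verbatim: the bidegree bookkeeping for the generators of Theorem~\ref{thm:grace and young}(i), the observation that order $2s$ forces exactly $s$ factors from $\{F_i,J_{kl}\}$, and the use of the syzygy of Theorem~\ref{thm:grace and young}(ii) to eliminate any product of two Jacobians are exactly the paper's steps, just written out in more detail.

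The only genuine divergence is in (iv), and even there the two arguments are dual packagings of the same content. The paper bounds $\dim_{\mathbb{C}}\mathcal{D}_{q,s+1}$ from above by using the relation $F_1J_{23}-F_2J_{13}+F_3J_{12}=0$ as a straightening rule, reducing the spanning set $\{F_{i_2}\cdots F_{i_s}J_{kl}\}$ to one indexed by semistandard tableaux of shape $(s,1)$, and compares with the lower bound $\langle F_1^{s-1}J_{12}\rangle_{\mathrm{GL}_m(\mathbb{C})}\subseteq\mathcal{D}_{q,s+1}$. You instead realize $\mathcal{D}_{q,s+1}$ as an equivariant quotient of $\mathrm{Sym}^{s-1}(\mathbb{C}^m)\otimes\Lambda^2(\mathbb{C}^m)$, decompose the source by Pieri into the components of highest weight $(s,1,0,\dots,0)$ and $(s-1,1,1,0,\dots,0)$, and check that the second is annihilated because its (unique up to scalar) highest weight vector maps to $F_1^{s-2}(F_1J_{23}-F_2J_{13}+F_3J_{12})=0$, while the first survives since $F_1^{s-1}J_{12}\ne 0$. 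Both routes consume identical inputs -- the non-vanishing of the highest weight vector $F_1^{s-1}J_{12}$ and the syzygy of Theorem~\ref{thm:grace and young}(iii) -- so this is a repackaging rather than a new idea; yours makes the representation theory explicit and avoids counting tableaux, while the paper's avoids invoking Pieri's rule. Two cosmetic remarks: the weight-$(s-1,1,1,0,\dots,0)$ highest weight vector should be displayed as an element of the source, e.g. $e_1^{s-1}\otimes(e_2\wedge e_3)-e_1^{s-2}e_2\otimes(e_1\wedge e_3)+e_1^{s-2}e_3\otimes(e_1\wedge e_2)$, rather than by its image (and this expression is meaningless for $s=1$, a degenerate case you do correctly flag, along with $m\le 2$); and your linear-independence argument for the $F_{i_1}\cdots F_{i_s}$ via the specialization $f_i=\lambda_i f$ is fine but slightly more than needed, since the paper's two-sided dimension comparison already yields it.
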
 
 
 \begin{proof}
 (i) The generators of $\mathcal{D}$ given in Theorem~\ref{thm:grace and young} have even order, hence 
 $\mathcal{D}_q=\{0\}$ for odd $q$. 
 
 (ii) and (iii): The bidegrees of the generators of the algebra $\mathcal{D}$ are the following:   
 \begin{equation*}\label{eq:bidegree}F_i\in \mathcal{D}_{2,1}, \quad D_{ij}\in \mathcal{D}_{0,2}, \quad E_{ijk}\in \mathcal{D}_{0,3}, 
 \quad J_{ij}\in \mathcal{D}_{2,2}.\end{equation*} 
Therefore  a product of the generators of $\mathcal{D}$ has order $q=2s$ if and only if it has $s$ factors of the type 
  $F_i$ or $J_{kl}$. Taking into account Theorem~\ref{thm:grace and young} (ii) we get that $J_{kl}J_{nr}$ can be rewritten as a $\mathcal{D}_0^+$-linear combination of products $F_iF_j$. Thus (ii) and (iii) obviously follow. 
  
  (iv) Denote by $U$ the subgroup of unipotent upper triangular matrices in $\mathrm{GL}_m(\mathbb{C})$, and denote by $T$ the subgroup of diagonal matrices in $\mathrm{GL}_m(\mathbb{C})$.  
  Then $F_1$ is obviously $U$-invariant. Moreover, $\mathrm{Jac}(f_1,f_2+af_1)=\mathrm{Jac}(f_1,f_2)$ 
  for any $a\in \mathbb{C}$. Thus $J_{12}$ is also $U$-invariant. For 
  $z=\mathrm{diag}(z_1,\dots,z_m)\in T$ we have 
  $z\cdot F_1^s=z_1^s F_1^s$ and $z\cdot (F_1^{s-1}J_{12})=z_1^sz_2F_1^{s-1}J_{12}$. 
  These calculations show that $F_1^s$ and $F_1^{s-1}J_{12}$ are highest weight vectors for $GL(\mathbb{C}^m)$ with weight $(s,0,\dots,0)$ and $(s,1,0,\dots,0)$, respectively. 
  
  The $\mathrm{GL}_m(\mathbb{C})$-action preserves the bigrading on $\mathcal{D}$. 
  Since $F_1^s\in \mathcal{D}_{q,s}$ and $F_1^{s-1}J_{12}\in \mathcal{D}_{q,s+1}$, we 
  conclude that $\langle F_1^s\rangle_{\mathrm{GL}_m(\mathbb{C})}\subseteq \mathcal{D}_{q,s}$ and 
  $\langle F_1^{s-1}J_{12}\rangle_{\mathrm{GL}_m(\mathbb{C})}\subseteq \mathcal{D}_{q,s+1}$. 
  By (iii) the space $\mathcal{D}_{q,s}$ is spanned by momomials of degree $s$ in $F_1,\dots,F_m$. The number of such monomials is $\binom{s+m-1}{s}$. This is equal to the dimension of the irreducible $\mathrm{GL}_m(\mathbb{C})$-module with highest weight $(s,0,\dots,0)$, implying the equality 
 $\langle F_1^s\rangle_{\mathrm{GL}_m(\mathbb{C})}= \mathcal{D}_{q,s}$. 
 The dimension of the irreducible $\mathrm{GL}_m(\mathbb{C})$-module with highest weight $(s,1,0,\dots,0)$ equals the number of semistandard tableaux of shape $(s,1)$ with content $\{1,\dots,m\}$; the set of such tableaux is 
 in a natural bijection with the set 
 $\mathcal{T}:=\{(i_1,\dots,i_s,j)\mid 1\le i_1\le\cdots\le i_s\le m,\ i_1<j\le m\}$. By (iii) the space $\mathcal{D}_{q,s+1}$ is 
spanned by elements of the form $F_{i_2}\cdots F_{i_s}J_{kl}$, where $1\le i_2\le \cdots \le i_s\le m$ and $k<l$. 
If $k>i_2$, then we may apply Theorem~\ref{thm:grace and young} (iii), asserting that 
$F_{i_2}J_{kl}=F_kJ_{i_2l}-F_lJ_{i_2,k}$. Therefore 
\[F_{i_2}\cdots F_{i_s}J_{kl}=F_kF_{i_3}\cdots J_{i_2l}-F_lF_{i_3}\cdots F_{i_s}J_{i_2k}.\]  
That is, $F_{i_2}\cdots F_{i_s}J_{kl}$ can be rewritten as a linear combination of elements of the form 
$F_{j_2}\cdots F_{j_s}J_{nr}$, where $(n,j_2,\dots,j_s,r)\in \mathcal{T}$. 
Consequently, 
$\dim_{\mathbb{C}}(\mathcal{D}_{q,s+1})\le |\mathcal{T}|=\dim_{\mathbb{C}}(\langle F_1^{s-1}J_{12}\rangle_{\mathrm{GL}_m(\mathbb{C})})$, 
implying in turn that 
$\mathcal{D}_{q,s+1}=\langle F_1^{s-1}J_{12}\rangle_{\mathrm{GL}_m(\mathbb{C})}$. 
 \end{proof} 
 
 \section{Proof of the main results on $\mathrm{SL}_2(\mathbb{C})$} \label{sec:main proof} 
 
 We have 
 \[\widetilde\Psi^{(d)}\cong \Psi^{(d)}\otimes(\Psi^{(d)})^*\cong \bigoplus_{q=0}^{d-1}\Psi^{(2q+1)}\] 
 by the Clebsch-Gordan rules. Consider the corresponding decomposition 
 \begin{equation*}\label{eq:clebsch-gordan}
 \mathbb{C}^{d\times d}=V_1\oplus V_3\oplus \cdots\oplus V_{2d-1}
 \end{equation*} 
 as a direct sum of minimal $\widetilde\Psi^{(d)}$-invariant subspaces, so 
 \begin{equation}\label{eq:V=C[x,y]} V_{2s+1}\cong \mathbb{C}[x,y]_{2s}\end{equation}
 as $\mathrm{SL}_2(\mathbb{C})$-modules. 
 For $s=0,1,\dots,d-1$ denote by $\mathcal{C}^{(d)}_{2s}$ the subspace of $\mathcal{C}^{(d)}$ consisting of the $\mathrm{SL}_2(\mathbb{C})$-equivariant polynomial maps from $\mathfrak{sl}_2(\mathbb{C})^{\oplus m}$ to 
 $V_{2s+1}$. Note that $\mathcal{C}^{(d)}_0$ coincides with $\mathcal{D}_0$ introduced in \eqref{eq:D_0} in Section~\ref{sec:main results}, $\mathcal{C}^{(d)}_{2s}$ is a graded $\mathcal{D}_0$-submodule of $\mathcal{C}^{(d)}$, where we endow $\mathcal{C}^{(d)}$ with the grading induced by the standard grading of the algebra of polynomial functions on $\mathfrak{sl}_2(\mathbb{C})^{\oplus m}$, 
 and  we have 
 \begin{equation}\label{eq:C^d-decomp}
 \mathcal{C}^{(d)}=\bigoplus_{s=0}^{d-1}\mathcal{C}^{(d)}_{2s},
 \end{equation} 
 a decomposition into the direct sum of graded $\mathcal{D}_0$-submodules.  
 Moreover, the identifications \eqref{eq:sl2=binary quadratic} and \eqref{eq:V=C[x,y]} induce the isomorphism 
 \begin{equation}\label{eq:C=D} 
\varphi: \mathcal{C}^{(d)}_{2s}\stackrel{\cong}\longrightarrow \mathcal{D}_{2s}
 \end{equation} 
 both as graded $\mathcal{D}_0$-modules and as $\mathrm{GL}_m(\mathbb{C})$-modules. 
 
 For the rest of this section we write $t_i:=t_i^{(d)}$, $i=1,\dots,m$. 
 \begin{lemma}\label{lemma:t_1^s} 
 Both $t_1^s$ and $t_1^{s-1}[t_1,t_2]$ are contained in 
 $\bigoplus_{j=0}^s\mathcal{C}^{(d)}_{2j}\setminus \bigoplus_{j=0}^{s-1}\mathcal{C}^{(d)}_{2j}$ for $s=1,\dots,d-1$. 
 \end{lemma}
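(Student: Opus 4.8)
The plan is to read off the statement from the decomposition \eqref{eq:C^d-decomp}, the isomorphisms $\varphi$ of \eqref{eq:C=D}, and the structure of $\mathcal{D}_{2s}$ recorded in Corollary~\ref{cor:odd q}. Write $\pi_{2j+1}\colon\mathbb{C}^{d\times d}\to V_{2j+1}$ for the projection attached to the Clebsch-Gordan decomposition, so that the $\mathcal{C}^{(d)}_{2j}$-component of a concomitant $F$ is $X\mapsto\pi_{2j+1}(F(X))$. The decomposition \eqref{eq:C^d-decomp} is by graded, $\mathrm{GL}_m(\mathbb{C})$-stable $\mathcal{D}_0$-submodules, and $\varphi$ is a graded isomorphism, so a homogeneous element of $\mathcal{C}^{(d)}_{2j}$ of degree $p$ is carried to $\mathcal{D}_{2j,p}$, which vanishes for $p<j$ by Corollary~\ref{cor:odd q}(ii). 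Hence for every homogeneous $F\in\mathcal{C}^{(d)}$ of degree $p$, the $\mathcal{C}^{(d)}_{2j}$-component of $F$ vanishes whenever $j>p$. Applying this with $p=s$ to $t_1^s$ and with $p=s+1$ to $t_1^{s-1}[t_1,t_2]$ gives $t_1^s\in\bigoplus_{j=0}^s\mathcal{C}^{(d)}_{2j}$ and $t_1^{s-1}[t_1,t_2]\in\bigoplus_{j=0}^{s+1}\mathcal{C}^{(d)}_{2j}$.

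Next I would establish the auxiliary fact that, for a standard $\mathfrak{sl}_2$-triple $(e,h,f)$ in $\mathfrak{sl}_2(\mathbb{C})$ (so $[h,e]=2e$, $[e,f]=h$) and $0\le s\le d-1$, one has $\rho^{(d)}(e)^s\in V_{2s+1}$ and $\rho^{(d)}(e)^s\neq0$. Nonvanishing holds because $\rho^{(d)}(e)$ is nilpotent with a single Jordan block of size $d$. For the containment, $\rho^{(d)}(e)^s$ has weight $2s$ for the adjoint action of $\rho^{(d)}(h)$, hence lies in $\bigoplus_{j\ge s}V_{2j+1}$ (using $V_{2j+1}\cong\mathbb{C}[x,y]_{2j}$); combined with the previous paragraph applied to the degree-$s$ concomitant $(X_1,\dots,X_m)\mapsto\rho^{(d)}(X_1)^s$, whose values lie in $\bigoplus_{j\le s}V_{2j+1}$, this forces $\rho^{(d)}(e)^s\in V_{2s+1}$. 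Now evaluating $t_1^s$ at the point with first coordinate $e$ and the rest $0$ yields $\pi_{2s+1}(\rho^{(d)}(e)^s)=\rho^{(d)}(e)^s\neq0$, so the $\mathcal{C}^{(d)}_{2s}$-component of $t_1^s$ is nonzero; this settles the claim for $t_1^s$. Similarly, evaluating $t_1^{s-1}[t_1,t_2]$ at the point with first two coordinates $e,h$ and the rest $0$ gives $\rho^{(d)}(e)^{s-1}\rho^{(d)}([e,h])=-2\,\rho^{(d)}(e)^s\in V_{2s+1}\setminus\{0\}$, so the $\mathcal{C}^{(d)}_{2s}$-component of $t_1^{s-1}[t_1,t_2]$ is nonzero as well.

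It remains to rule out a $\mathcal{C}^{(d)}_{2(s+1)}$-component of $t_1^{s-1}[t_1,t_2]$ (this is vacuous when $s=d-1$). Since $[t_1,t_2]$ is bilinear in $X_1$ and $X_2$, this component is homogeneous of degree $s+1$ and of multidegree $(s,1,0,\dots,0)$ in the $m$ summands, and via $\varphi$ it corresponds to an element of $\mathcal{D}_{2(s+1)}$ of the same multidegree. By Corollary~\ref{cor:odd q}(iii) together with a degree count, neither $\mathcal{D}_0^+\mathcal{D}_{2(s+1)}$ nor $\mathcal{D}_{2(s+1),s+2}$ has anything in degree $s+1$, while by Corollary~\ref{cor:odd q}(ii) the multidegree-$(s,1,0,\dots,0)$ part of $\mathcal{D}_{2(s+1),s+1}=\mathrm{Span}_{\mathbb{C}}\{F_{i_1}\cdots F_{i_{s+1}}\}$ is the line $\mathbb{C}F_1^sF_2$. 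So the component in question equals $\lambda\,\varphi^{-1}(F_1^sF_2)$ for some $\lambda\in\mathbb{C}$. But $t_1^{s-1}[t_1,t_2]$ vanishes identically on $\{X_1=X_2,\ X_3=\dots=X_m=0\}$ because $[t_1,t_1]=0$, hence so does each of its components; since $\varphi$ is induced by the $\mathrm{GL}_m(\mathbb{C})$-equivariant identification \eqref{eq:sl2=binary quadratic} it is compatible with the specialization $f_1=f_2$, under which $F_1^sF_2$ sends $(f,f,0,\dots,0)$ to $f^{s+1}\neq0$ and so does not vanish. Therefore $\lambda=0$, completing the proof. The main obstacle is precisely this last step: one must combine the multiplicity-one information about $\mathcal{D}_{2(s+1)}$ with the vanishing of $t_1^{s-1}[t_1,t_2]$ on the ``diagonal'' $X_1=X_2$, and verify that $\varphi$ respects the relevant specializations and gradings.
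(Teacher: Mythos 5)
Your proof is correct, and it reaches the conclusion by a partly different route than the paper. The lower-bound half --- evaluating at $(e,0,\dots,0)$ and $(e,h,0,\dots,0)$ to produce the nonzero vector $\rho^{(d)}(e)^s\in V_{2s+1}$ and conclude that neither element lies in $\bigoplus_{j=0}^{s-1}\mathcal{C}^{(d)}_{2j}$ --- is essentially what the paper does. For the upper bound the paper argues directly on matrices: every $\rho^{(d)}(X)$ is tridiagonal, and the $H$-weight decomposition of $\mathbb{C}^{d\times d}$ shows that the span $M_s$ of all products of $s$ elements of $\rho^{(d)}(\mathfrak{sl}_2(\mathbb{C}))$ sits inside $V_1\oplus\cdots\oplus V_{2s+1}$; the identity $[\rho^{(d)}(X_1),\rho^{(d)}(X_2)]=\rho^{(d)}([X_1,X_2])$ then exhibits $t_1^{s-1}[t_1,t_2]$ as a product of $s$ (not $s+1$) such factors, so both elements are handled uniformly with no extra work. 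You instead transport the degree bound $\mathcal{D}_{2j,p}=\{0\}$ for $p<j$ (Corollary~\ref{cor:odd q}(ii)) through the isomorphism $\varphi$; this is legitimate and non-circular, since Corollary~\ref{cor:odd q} is established independently in Section~\ref{sec:classical}, but because the commutator has degree $s+1$ it only yields membership in $\bigoplus_{j=0}^{s+1}\mathcal{C}^{(d)}_{2j}$, forcing your additional argument that the $\mathcal{C}^{(d)}_{2(s+1)}$-component must be a scalar multiple of $\varphi^{-1}(F_1^sF_2)$ (by the multidegree $(s,1,0,\dots,0)$ and the description of $\mathcal{D}_{2(s+1),s+1}$) and must vanish because $t_1^{s-1}[t_1,t_2]$ vanishes on $X_1=X_2$ while $F_1^sF_2$ does not. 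That extra step checks out. The paper's route buys self-containedness and symmetry between the two elements; yours trades the explicit matrix computation for structural facts about $\mathcal{D}$ that the paper needs later anyway in the proof of Theorem~\ref{thm:D_0-module generators}.
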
 
 
 \begin{proof} 
Take the basis 
\[e:=\left(\begin{array}{cc}0 & 1 \\0 & 0\end{array}\right), \quad f:=\left(\begin{array}{cc}0 & 0 \\1 & 0\end{array}\right), 
\quad h:=\left(\begin{array}{cc}1 & 0 \\0 & -1\end{array}\right)\]
of the Lie algebra $\mathfrak{sl}_2(\mathbb{C})$, so $[h,e]=2e$, $[h,f]=-2f$, and $[e,f]=h$.  
Identify the space $\mathbb{C}[x,y]_{d-1}$ of degree $(d-1)$ binary forms with $\mathbb{C}^d$ by choosing the basis 
$x^{d-1},x^{d-2}y,\dots,y^{d-1}$. 
Denote by $E_{i,j}$ the matrix unit with entry $1$ in the $(i,j)$ position and zeros in all other positions. 
The representation $\rho^{(d)}:\mathfrak{sl}_2(\mathbb{C})\to \mathfrak{gl}(\mathbb{C}[x,y]_{d-1})=\mathfrak{gl}(\mathbb{C}^d)$ takes the 
following matrix values on $e,f,h$: 
\[\rho^{(d)}(e)=\sum_{i=1}^{d-1}iE_{i,i+1},\quad \rho^{(d)}(f)=\sum_{i=1}^{d-1}(d-i)E_{i+1,1}, \quad 
\rho^{(d)}(h)=\sum_{i=1}^d(d+1-2i)E_{i,i}.\] 
This shows that for any $X\in \mathfrak{sl}_2(\mathbb{C})$, the matrix $\rho^{(d)}(X)$ is tridiagonal. 
Therefore for $k\in \{0,1,\dots,d-1\}$ and $X_1,\dots,X_k\in \mathfrak{sl}_2(\mathbb{C})$ the $(i,j)$-entry of 
$\rho^{(d)}(X_1)\cdots\rho^{(d)}(X_k)$ is $0$ when $|i-j|>k$. 

Denote by $H=\{\mathrm{diag}(z,z^{-1})\mid z\in \mathbb{C}^{\times}\}$ the diagonal subgroup of $\mathrm{SL}_2(\mathbb{C})$. 
We have 
$\Psi^{(d)}(\mathrm{diag}(z,z^{-1}))=\mathrm{diag}(z^{d-1},z^{d-3},\dots,z^{-d+1})$ and 
\[\widetilde\Psi^{(d)}(\mathrm{diag}(z,z^{-1}))(E_{i,j})=z^{2(j-i)}E_{i,j},\]
so $E_{i,j}$ is an $H$-weight vector with weight $2(j-i)$.  
It follows that the  $\widetilde\Psi^{(d)}$-invariant subspace 
\[M_s:=\mathrm{Span}_{\mathbb{C}}\{\rho^{(d)}(X_1)\cdots\rho^{(d)}(X_s)\mid 
X_1,\dots,X_s\in \mathfrak{sl}_2(\mathbb{C})\}\] 
is contained in the sum of $H$-weight subspaces of $\mathbb{C}^{d\times d}$ with weight 
$k$, where $|k|\le 2s$. In particular, the highest weight vectors in $V_{2s+3},\dots,V_{2d-1}$ do not belong to $M_s$. 
Since  $V_{2s+3},\dots,V_{2d-1}$ are minimal $\mathrm{SL}_2(\mathbb{C})$-invariant subspaces and they are pairwise non-isomorphis, we conclude that 
\[M_s\cap V_{2s+3}\oplus\cdots\oplus V_{2d-1}=\{0\},\]
implying (as $\mathbb{C}^{d\times d}$ is a multiplicity-free $\mathrm{SL}_2(\mathbb{C})$-module) that 
\[M_s\subseteq V_1\oplus V_3\oplus\cdots \oplus V_{2s+1}.\] 
The image of the concomitant $t_1^s:\mathfrak{sl}_2(\mathbb{C})^{\oplus m}\to \mathbb{C}^{d\times d}$ is 
contained in $M_s$, thus we conclude that $t_1^s\in \bigoplus_{j=0}^s\mathcal{C}^{(d)}_{2j}$. 
Similarly, since $[\rho^{(d)}(X_1),\rho^{(d)}(X_2)]=\rho^{(d)}([X_1,X_2])$, the image of the concomitant 
$t_1^{s-1}[t_1,t_2]$ is also contained in $M_s$, and so 
$t_1^{s-1}[t_1,t_2]\in \bigoplus_{j=0}^s\mathcal{C}^{(d)}_{2j}$ as well. 

Observe finally that $t_1^s(e,0,\dots,0)$ and $(t^{s-1}[t_1,t_2])(e,h,0,\dots,0)$  are non-zero scalar multiples of 
$\rho^{(d)}(e)^s$, hence they commute with $\rho^{(d)}(e)$; furthermore, they are $H$-weight vectors of weight $2s+1$. 
Therefore they are 
highest weight vectors in 
the $\mathrm{SL}_2(\mathbb{C})$-module $\mathbb{C}^{d\times d}$ with weight $2s+1$. 
It follows that none of  the images of the concomitants $t_1^s$ or $t^{s-1}[t_1,t_2]$ is contained in 
$\bigoplus_{j=0}^{s-1}V_{2j+1}$, 
implying in turn that none of $t_1^s$ or $t^{s-1}[t_1,t_2]$ is contained in 
$\bigoplus_{j=0}^{s-1}\mathcal{C}^{(d)}_{2j}$. 
 \end{proof} 
  
 \begin{proofof}{Theorem~\ref{thm:D_0-module generators}}
The elements $t_1^s$ and $t_1^{s-1}[t_1,t_2]$ are fixed by the subgroup of unipotent upper triangular matrices in $\mathrm{GL}_m(\mathbb{C})$, and they are eigenvectors of the diagonal subgroup of $\mathrm{GL}_m(\mathbb{C})$ 
 with weight $(s,0,\dots,0)$ and $(s,1,0,\dots,0)$, respectively.  So they are $\mathrm{GL}_m(\mathbb{C})$-highest weight vectors, and generate irreducible $\mathrm{GL}_m(\mathbb{C})$-submodules in $\mathcal{C}^{(d)}$. 
 
 By \eqref{eq:C=D}  and \eqref{eq:C^d-decomp} it is clearly sufficient to show that for $s=1,\dots,d-1$ we have 
 \begin{equation}\label{eq:C decomp} 
 \bigoplus_{j=0}^s\mathcal{C}^{(d)}_{2j}=\bigoplus_{j=0}^{s-1}\mathcal{C}^{(d)}_{2j}\oplus 
 \mathcal{D}_0^+\mathcal{C}^{(d)}_{2s}
 \oplus\langle t_1^s\rangle_{\mathrm{GL}_m(\mathbb{C})}\oplus\langle t_1^{s-1}[t_1,t_2]\rangle_{\mathrm{GL}_m(\mathbb{C})}. 
 \end{equation}
 Using the isomorphism $\varphi$ from \eqref{eq:C=D} we can transfer the problem to the $\mathcal{D}_0$-module 
 $\mathcal{D}$, described in Section~\ref{sec:classical}. Set $u:=\varphi(t_1^s)$ and $w:=\varphi(t_1^{s-1}[t_1,t_2])$. 
 By Lemma~\ref{lemma:t_1^s} we have 
 $u\in \bigoplus_{j=0}^s\mathcal{D}_{2j,s}\setminus \bigoplus_{j=0}^{s-1}\mathcal{D}_{2j}$ and 
 $w\in  \bigoplus_{j=0}^s\mathcal{D}_{2j,s+1}\setminus \bigoplus_{j=0}^{s-1}\mathcal{D}_{2j}$. 
 By Corollary~\ref{cor:odd q} (iii) the degree $s$ and degree $s+1$ homogeneous components of 
 $\mathcal{D}_0^+\mathcal{D}_{2s}$ are zero, therefore 
 \[u,w\notin \bigoplus_{j=0}^{s-1}\mathcal{D}_{2j}\oplus \mathcal{D}_0^+\mathcal{D}_{2s}.\]
 The space on the right hand side above is a $\mathrm{GL}_m(\mathbb{C})$-submodule in $\mathcal{D}$, 
and since $\varphi$ is a $\mathrm{GL}_m(\mathbb{C})$-module isomorphism, 
 $\langle u\rangle_{\mathrm{GL}_m(\mathbb{C})}$ and $\langle w\rangle_{\mathrm{GL}_m(\mathbb{C})}$ are non-isomorphic minimal non-zero $\mathrm{GL}_m(\mathbb{C})$-invariant subspaces. 
 Therefore 
 \begin{equation}\label{eq:cap} \langle u\rangle_{\mathrm{GL}_m(\mathbb{C})}\oplus \langle w\rangle_{\mathrm{GL}_m(\mathbb{C})}
 \cap \bigoplus_{j=0}^{s-1}\mathcal{D}_{2j}\oplus \mathcal{D}_0^+\mathcal{D}_{2s}=\{0\}.
 \end{equation} 
 On the other hand, by Corollary~\ref{cor:odd q} (iii) and (iv) we have 
 \begin{equation}\label{eq:factor} 
 \bigoplus_{j=0}^s\mathcal{D}_{2j}/(\bigoplus_{j=0}^{s-1}\mathcal{D}_{2j}\oplus \mathcal{D}_0^+\mathcal{D}_{2s})
 \cong \langle u\rangle_{\mathrm{GL}_m(\mathbb{C})}\oplus \langle w\rangle_{\mathrm{GL}_m(\mathbb{C})} 
\end{equation}
as $\mathbb{C}$-vector spaces (in fact as $\mathrm{GL}_m(\mathbb{C})$-modules). 
Combining \eqref{eq:cap} and \eqref{eq:factor} we conclude that 
\[\bigoplus_{j=0}^s\mathcal{D}_{2j}=\bigoplus_{j=0}^{s-1}\mathcal{D}_{2j}\oplus 
 \mathcal{D}_0^+\mathcal{D}_{2s}
 \oplus\langle u \rangle_{\mathrm{GL}_m(\mathbb{C})}\oplus\langle w \rangle_{\mathrm{GL}_m(\mathbb{C})}. 
\] 
The isomorphism $\varphi^{-1}$ (see \eqref{eq:C=D}) applied to the above equality gives the desired \eqref{eq:C decomp}, 
finishing the proof. 
 \end{proofof} 
 
 \begin{proofof}{Theorem~\ref{thm:algebra generators}}
 Theorem~\ref{thm:D_0-module generators} and Proposition~\ref{prop:D_0generators} imply that the algebra 
 $\mathcal{C}^{(d)}$ is generated by $t_1,\dots,t_m$, $u_{ij}^{(d)}I_d$ ($1\le i\le j\le m$), and 
 $w_{ijk}^{(d)}I_d$ ($1\le i<j<k\le m$).  The generators $w_{ijk}^{(d)}I_d$  are redundant 
by Lemma~\ref{lemma:w_{ijk}} below, so we can omit them. It remains to show that we are left with a minimal generating system. 
The generators $t_1,\dots,t_m$ have degree $1$, the minimal possible positive degree, and they are linearly independent. 
The other generators have degree $2$, and they span an irreducible $\mathrm{GL}_m(\mathbb{C})$-module with highest weigh $(2,0,...,0)$, the element $u_{11}^{(d)}I_d$ being the corresponding highest weight vector. 
Now the linear span of the quadratic products in $t_1,\dots,t_m$ contains only one (up to non-zero scalar multiples) highest weight vector of weight $(2,0,\dots,0)$, namely $t_1^2$, and for $d\ge 3$, $t_1^2$ does not belong to 
$\mathcal{D}_0I_d$. Therefore $\mathrm{Span}_{\mathbb{C}}\{u_{ij}^{(d)}I_d\mid 1\le i\le j\le m\}$ is disjoint from the subalgebra of $\mathcal{C}^{(d)}$ generated by $t_1,\dots,t_m$. This finishes the proof of the minimality of the generating system of $\mathcal{C}^{(d)}$ given in the theorem. 
 \end{proofof}

\begin{lemma}\label{lemma:w_{ijk}} 
We have the equality 
\[(d^2-1)w_{ijk}^{(d)}I_d=\sum_{i=1}^{\lfloor \frac{d+1}{2}\rfloor}(d+1-2i)^2\mathrm{St}_3(t_1,t_2,t_3)\in \mathcal{C}^{(d)},\]  
where 
\begin{align*}\mathrm{St}_3(t_1,t_2,t_3)=
 t_1t_2t_3+t_2t_3t_1+t_3t_1t_2  
-t_2t_1t_3-t_1t_3t_2-t_3t_2t_1.
\end{align*} 
\end{lemma}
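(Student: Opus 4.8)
The plan is to show that $\mathrm{St}_3(t_i,t_j,t_k)$ is a \emph{scalar} concomitant — i.e.\ lies in $\mathcal{D}_0 I_d$ — and then to pin down the scalar by a single evaluation. By permuting the copies of $\mathfrak{sl}_2(\mathbb{C})$ it is enough to treat $i=1,j=2,k=3$, and I write $t_i:=t_i^{(d)}$ as in Section~\ref{sec:main proof}. Since $d^2-1\neq 0$ and $\sum_{\ell=1}^{\lfloor(d+1)/2\rfloor}(d+1-2\ell)^2=\frac{d(d^2-1)}{6}$ (an elementary summation), the identity to be proved is equivalent to $w_{123}^{(d)}I_d=\frac d6\,\mathrm{St}_3(t_1,t_2,t_3)$. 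I would establish that both sides of this equation lie in a common one-dimensional space of concomitants, and that they agree at the single point $(e,f,h,0,\dots,0)$, where the common value is nonzero.

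First I would record that $(X_1,X_2,X_3)\mapsto\mathrm{St}_3(\rho^{(d)}(X_1),\rho^{(d)}(X_2),\rho^{(d)}(X_3))$ is an $\mathrm{SL}_2(\mathbb{C})$-equivariant (by Lemma~\ref{lemma:Ad intertwiner}, since $\widetilde\Psi^{(d)}(g)$ is an algebra automorphism), trilinear, alternating map $\mathfrak{sl}_2(\mathbb{C})^{\oplus 3}\to\mathbb{C}^{d\times d}$. The space of all such maps is $\mathrm{Hom}_{\mathrm{SL}_2(\mathbb{C})}(\wedge^3\mathfrak{sl}_2(\mathbb{C}),\mathbb{C}^{d\times d})$; as $\dim\mathfrak{sl}_2(\mathbb{C})=3$, the module $\wedge^3\mathfrak{sl}_2(\mathbb{C})$ is one-dimensional, hence trivial, so this space is identified with $(\mathbb{C}^{d\times d})^{\mathrm{SL}_2(\mathbb{C})}=\mathrm{End}_{\mathrm{SL}_2(\mathbb{C})}(\mathbb{C}^d)=\mathbb{C}I_d$ by Schur's Lemma applied to the irreducible module $\mathbb{C}^d$. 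In particular it is one-dimensional and all its members are $\mathbb{C}I_d$-valued; thus $\mathrm{St}_3(t_1,t_2,t_3)$ lies in it. Next I would check that $w_{123}^{(d)}I_d$ lies in it too: $w_{123}^{(d)}$ is trilinear in the first three slots, and it is alternating because $\rho^{(d)}(X_2)\rho^{(d)}(X_3)+\rho^{(d)}(X_3)\rho^{(d)}(X_2)$ depends equivariantly on $X_2\cdot X_3\in S^2\mathfrak{sl}_2(\mathbb{C})$, and $S^2\mathbb{C}[x,y]_2\cong\mathbb{C}[x,y]_4\oplus\mathbb{C}[x,y]_0$ contains no copy of $\mathfrak{sl}_2(\mathbb{C})\cong\mathbb{C}[x,y]_2$; hence this symmetrized product is orthogonal to $\rho^{(d)}(X_1)$ under the invariant trace form on $\mathbb{C}^{d\times d}$ (distinct isotypic components being orthogonal, as $\mathbb{C}^{d\times d}$ is multiplicity-free), giving $w_{123}^{(d)}+w_{132}^{(d)}=0$, and the other transpositions are handled the same way. (Alternatively one may invoke the fact, itself independent of this lemma, that up to scalars $\mathcal{D}_0$ has a unique trilinear invariant in three fixed arguments.)

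Then I would evaluate both sides at $(e,f,h,0,\dots,0)$ using the explicit tridiagonal matrices from the proof of Lemma~\ref{lemma:t_1^s}, namely $\rho^{(d)}(e)=\sum_{i=1}^{d-1}iE_{i,i+1}$, $\rho^{(d)}(f)=\sum_{i=1}^{d-1}(d-i)E_{i+1,i}$, $\rho^{(d)}(h)=\sum_{i=1}^{d}(d+1-2i)E_{i,i}$. From $\rho^{(d)}(e)\rho^{(d)}(f)=\sum_i i(d-i)E_{i,i}$ one gets $w_{123}^{(d)}(e,f,h,0,\dots)=\sum_{i=1}^d i(d-i)(d+1-2i)=\frac{d(d^2-1)}{6}$, and from $\rho^{(d)}(e)\rho^{(d)}(h)=\sum_i i(d-1-2i)E_{i,i+1}$ together with cyclicity of the trace one gets $\mathrm{Tr}\,\mathrm{St}_3(\rho^{(d)}(e),\rho^{(d)}(f),\rho^{(d)}(h))=6\sum_{i=1}^d i(d-i)=d(d^2-1)$; since $\mathrm{St}_3(t_1,t_2,t_3)$ is $\mathbb{C}I_d$-valued, the last equality forces $\mathrm{St}_3(\rho^{(d)}(e),\rho^{(d)}(f),\rho^{(d)}(h))=(d^2-1)I_d$. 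Hence both $w_{123}^{(d)}I_d$ and $\frac d6\mathrm{St}_3(t_1,t_2,t_3)$ take the value $\frac{d(d^2-1)}{6}I_d$ at this point, which is nonzero for $d\ge 2$; as two members of a one-dimensional space agreeing at a point where they are nonzero must coincide, $w_{123}^{(d)}I_d=\frac d6\mathrm{St}_3(t_1,t_2,t_3)$, and multiplying by $d^2-1$ and substituting $\frac{d(d^2-1)}{6}=\sum_{\ell=1}^{\lfloor(d+1)/2\rfloor}(d+1-2\ell)^2$ gives the asserted identity. In passing this also verifies $w_{123}^{(d)}\neq 0$, the non-vanishing used in the proof of Proposition~\ref{prop:D_0generators}.

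The main obstacle is the first step: getting the ``multiplicity-one'' reasoning exactly right — identifying $\mathrm{Hom}_{\mathrm{SL}_2(\mathbb{C})}(\wedge^3\mathfrak{sl}_2(\mathbb{C}),\mathbb{C}^{d\times d})$ with $\mathbb{C}I_d$ so that $\mathrm{St}_3(t_1,t_2,t_3)$ is forced to be scalar-valued — and, in the same vein, being careful that $w_{123}^{(d)}$ is genuinely alternating. Otherwise the right-hand side of the lemma would come out as the ``wrong'' combination $w_{123}^{(d)}-w_{132}^{(d)}$ instead of $2w_{123}^{(d)}$, owing to the trace-cyclicity correction hidden in $\mathrm{Tr}\,\mathrm{St}_3$. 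Everything after that — the two summation evaluations and the final rescaling — is routine.
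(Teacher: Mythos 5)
Your proof is correct, and it takes a genuinely different route from the paper's at the key structural step. The paper first invokes Razmyslov's theorem on the weak polynomial identities of the irreducible $\mathfrak{sl}_2(\mathbb{C})$-representations to see that $\mathrm{St}_3(t_1,t_2,t_3)$ commutes with $t_4$, hence (by Schur) is $\mathbb{C}I_d$-valued with entries in $\mathcal{D}_0$, and then uses the uniqueness of the trilinear invariant of three binary quadratics to match it with $w_{ijk}^{(d)}I_d$ up to a scalar. You instead observe that both sides are $\mathrm{SL}_2(\mathbb{C})$-equivariant alternating trilinear maps, so they live in $\mathrm{Hom}_{\mathrm{SL}_2(\mathbb{C})}(\wedge^3\mathfrak{sl}_2(\mathbb{C}),\mathbb{C}^{d\times d})\cong(\mathbb{C}^{d\times d})^{\mathrm{SL}_2(\mathbb{C})}=\mathbb{C}I_d$ (using that $\wedge^3$ of the three-dimensional adjoint module is the trivial character, which is indeed trivial since $\mathrm{SL}_2(\mathbb{C})$ is perfect), a one-dimensional space of scalar-valued maps; this single multiplicity-one computation replaces both the appeal to Razmyslov and the uniqueness of the trilinear invariant. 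The one extra obligation your route incurs -- that $w_{ijk}^{(d)}$ is alternating -- you discharge correctly via $S^2\mathbb{C}[x,y]_2\cong\mathbb{C}[x,y]_4\oplus\mathbb{C}[x,y]_0$ and orthogonality of distinct isotypic components under the trace form (or, as you note, via the determinantal form of the unique trilinear invariant). Your closing evaluations check out: $\mathrm{Tr}(\rho^{(d)}(e)\rho^{(d)}(f)\rho^{(d)}(h))=\sum_i i(d-i)(d+1-2i)=\frac{d(d^2-1)}{6}=\sum_{\ell=1}^{\lfloor(d+1)/2\rfloor}(d+1-2\ell)^2$ and $\mathrm{Tr}\,\mathrm{St}_3=3(\mathrm{Tr}(efh)-\mathrm{Tr}(ehf))=6\sum_i i(d-i)=d(d^2-1)$, forcing $\mathrm{St}_3=(d^2-1)I_d$ at that point; this is the same normalization point the paper uses (the paper leaves the computation implicit). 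What each approach buys: the paper's is shorter given the citation and ties the lemma to the PI-theoretic theme of the paper; yours is self-contained elementary representation theory, makes the one-dimensionality explicit, and yields the non-vanishing of $w_{ijk}^{(d)}$ (needed in Proposition~\ref{prop:D_0generators}) as a transparent by-product.
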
 

\begin{proof} 
We have the equality 
$t_4\mathrm{St}_3(t_1,t_2,t_3)=\mathrm{St}_3(t_1,t_2,t_3)t_4$ 
by \cite[Theorem 38.1]{razmyslov} (giving a basis of the polynomial identities satisfied by the irreducible representations of $\mathfrak{sl}_2(\mathbb{C})$). 
It follows that the image of $\mathrm{St}_3(t_1,t_2,t_3)$ in $\mathbb{C}^{d\times d}$ is contained in the centralizer of 
$\rho^{(d)}(\mathfrak{sl}_2(\mathbb{C}))$. Hence by Schur's Lemma we have $\mathrm{St}_3(t_1,t_2,t_3)=fI_d$ for some 
$f\in \mathbb{C}[\mathfrak{sl}_2(\mathbb{C})^{\oplus m}]$. Moreover, since $\mathrm{St}_3(t_1,t_2,t_3)$ is a concomitant, $f\in \mathcal{D}_0$. 
However, for each $1\le i<j<k\le m$, up to scalar multiples there is only one trilinear element in $\mathcal{D}_0$, 
that is linear in the $i$th, $j$th, and $k$th direct summands in 
$\mathfrak{sl}_2(\mathbb{C})^{\oplus m}$.  
Consequently, $\mathrm{St}_3(t_1,t_2,t_3)$ is a scalar multiple of $w_{ijk}^{(d)}I_d$. 
Making for example the substitution $t_1\mapsto \rho^{(d)}(h)$, 
$t_2\mapsto \rho^{(d)}(e)$, $t_3\mapsto \rho^{(d)}(f)$ in $w_{ijk}^{(d)}$ and in $\mathrm{St}_3(t_1,t_2,t_3)$ 
one can easily compute the scalar, and get the desired equality.  
\end{proof}

\end{document}